\documentclass[11pt]{amsart}

\usepackage[T1]{fontenc}
\usepackage[utf8]{inputenc}
\usepackage{lmodern}
\usepackage{amsmath,amssymb,amsthm,mathtools}
\usepackage{microtype}
\usepackage{enumitem}
\usepackage{booktabs}
\usepackage{graphicx}
\usepackage{hyperref}
\usepackage[margin=1.03in]{geometry}
\usepackage{tikz}
\usetikzlibrary{arrows.meta,calc,angles,quotes,decorations.pathreplacing}

\hypersetup{colorlinks=true,linkcolor=blue,citecolor=blue,urlcolor=blue}

\newtheorem{theorem}{Theorem}[section]
\newtheorem{lemma}[theorem]{Lemma}
\newtheorem{proposition}[theorem]{Proposition}
\newtheorem{corollary}[theorem]{Corollary}
\theoremstyle{definition}
\newtheorem{definition}[theorem]{Definition}

\newtheorem{problem}[theorem]{Problem}
\theoremstyle{remark}

\newcommand{\R}{\mathbb R}
\newcommand{\RP}{\mathbb {RP}}
\newcommand{\dd}{\,\mathrm d}
\newcommand{\sgn}{\operatorname{sgn}}

\newcommand{\Top}{\operatorname{Top}}

\title{Counterexamples and Symmetry for Uneven Orthogonal Mass Partitions in the Plane}
\author{Leonardo Mart\'inez-Sandoval}
\address{Faculty of Sciences, UNAM, Mexico City, Mexico}
\email{leomtz@ciencias.unam.mx}
\subjclass[2020]{Primary 52A37; Secondary 52A38, 52C35, 52-08.}
\keywords{mass partitions, orthogonal partitions, Grünbaum's conjecture, affine symmetry, Gaussian perturbations, discrete geometry}

\begin{document}

\begin{abstract}
Gr\"unbaum asked whether every planar convex body admits, for every $0\leq t\leq 1/4$, two orthogonal lines cutting it into pieces with cyclically ordered areas $t,t,1/2-t,1/2-t$.  B\'ar\'any posed the analogous question for well-behaved planar measures and conjectured that the answer there is negative.

We confirm B\'ar\'any's conjecture in a particularly robust form: for every fixed $0<t<1/4$ we construct smooth, strictly positive, centrally symmetric, strongly log-concave measures arbitrarily close to the standard Gaussian for which the prescribed partition does not exist.  In contrast, we prove that the partition exists for every $t$ whenever the measure is invariant under an orientation-reversing affine involution. We also exhibit a $96$-point counterexample for which no pair of perpendicular lines produces cyclic counts $8,8,40,40$.

\end{abstract}

\maketitle

\section{Introduction}

\subsection{Literature overview}
Mass partition problems ask whether prescribed proportions of one or several measures can be realized by geometrically constrained cuts.  The constraints may concern the cutting objects themselves or their mutual position, such as prescribed incidences, intersections, or orthogonality.  The subject has a long tradition at the interface of convex and discrete geometry, topology, and computational geometry; see the survey of Rold\'an-Pensado and Sober\'on \cite{RoldanSoberon2022}.

A central early question is a problem posed by Gr\"unbaum in 1960: can every sufficiently regular mass in $\R^d$ be cut into $2^d$ equal parts by $d$ affine hyperplanes?  Gr\"unbaum noted that the answer is positive for $d\le2$ \cite{Grunbaum1960}; Hadwiger proved the case $d=3$ \cite{Hadwiger1966}; and Avis constructed counterexamples for every $d\ge5$ \cite{Avis1984}.  Sober\'on recently settled the remaining case $d=4$ negatively by constructing a smooth strictly positive density which cannot be equipartitioned by four affine hyperplanes \cite{Soberon2026}. For the history and earlier progress on Gr\"unbaum's problem, see Blagojevi\'c, Frick, Haase and Ziegler \cite{BlagojevicFrickHaaseZiegler2018}.

Gr\"unbaum also observed in the same paper that every continuous planar mass admits an equipartition by two perpendicular lines \cite{Grunbaum1960}.  Requiring mutual orthogonality in higher dimensions is much more rigid: Maldonado and Rold\'an-Pensado proved that for $d=3$ there are masses in $\R^d$ which cannot be equipartitioned by $d$ mutually orthogonal hyperplanes \cite{MaldonadoRoldan2025}, thereby completing the problem under the orthogonality restriction together with the results above.

The present paper concerns an uneven planar version.  As the contribution ``Quadrupartitions'' for the \emph{Open Problems} chapter in \emph{Fete of Combinatorics and Computer Science}, B\'ar\'any attributes the following problem to Gr\"unbaum \cite{Alon2010OpenProblems}.

\begin{problem}[Gr\"unbaum's convex-body problem]\label{q:grunbaum}
If $K\subset\R^2$ has area one and $0\leq t\le1/4$, must there be two orthogonal lines whose four regions, in cyclic order, have areas 
\[
 t,\quad t,\quad \frac12-t,\quad \frac12-t ?
\]
\end{problem}

In the same text, B\'ar\'any also asks the corresponding question for line-null probability measures.

\begin{problem}[B\'ar\'any's mass-partition problem]\label{q:barany}
If $\mu$ is a Borel probability measure on $\R^2$, assigning measure zero to every affine line, and $0\leq t\leq 1/4$, must there be two orthogonal lines whose four regions, in cyclic order, have masses
\[
 t,\quad t,\quad \frac12-t,\quad \frac12-t ?
\]
\end{problem}

The two questions were expected to behave differently.  Gr\"unbaum's convex-body problem remains open and is conjectured to have a positive answer, whereas B\'ar\'any conjectured that the analogous statement for measures is false.

A notable partial result for convex bodies is due to Arocha, Jer\'onimo-Castro, Montejano and Rold\'an-Pensado: they proved the conjecture when the diameter is at least $\sqrt{37}$ times the minimum width, with the improved constant $3$ for centrally symmetric bodies \cite{ArochaEtAl2010}. A related spherical version was studied by Blagojević and Dimitrijević Blagojević \cite{BlagojevicDimitrijevic2013}.

Symmetry assumptions are especially relevant here because in several neighboring partition problems they restore or strengthen positive results.  Makeev proved that if two continuous mass distributions in $\R^n$ have a common center of symmetry, then there are $n$ hyperplanes through that center such that any two of them quarter both masses \cite{Makeev2007}.  More recently, Fradelizi, Hubard, Meyer, Rold\'an-Pensado and Zvavitch showed that every centrally symmetric convex body in $\R^3$ admits three planes through the origin which split the body into eight equal-volume pieces and simultaneously quarter each central planar section by the other two planes \cite{FradeliziEtAl2022}.  These results suggest asking which kinds of symmetry might force a positive answer to Problems~\ref{q:grunbaum} and~\ref{q:barany}.

The problem also has a finite version. Following the terminology of Maldonado's thesis \cite{MaldonadoThesis2025}, one may ask the following.

\begin{problem}\label{q:discrete}
Let $P$ be a finite set of $N=2m$ points in $\R^2$, and let $0\le k\le N/4$ be an integer.  Must $P$ admit an orthogonal $k$-partition, that is, a partition into four cyclically ordered classes of sizes
\[
 k,\quad k,\quad m-k,\quad m-k
\]
separated by two perpendicular lines?
\end{problem}

Maldonado and Rold\'an-Pensado \cite{MaldonadoThesis2025} proved that every set of an even number of planar points admits an orthogonal $1$-partition, and that every such point set in convex position admits an orthogonal $k$-partition for every $k$. They also report substantial computational searches for a counterexample: variants of the algorithms developed there, together with differential evolution as a search heuristic, found none among configurations with up to $26$ points.

\subsection{Our results}
Our first result settles B\'ar\'any's question in the predicted negative direction, and in fact does so separately at every interior target.

\begin{theorem}\label{thm:counterintro}
For every $0<t<1/4$ there exists a smooth, strictly positive, centrally symmetric, strongly log-concave probability density on $\R^2$ (hence, a line-null probability measure) for which no two perpendicular lines cut the measure cyclically into masses
\[
 t,\quad t,\quad \frac12-t,\quad \frac12-t.
\]
The density may be chosen arbitrarily close in $C_2(\mathbb{R}^2)$ to the standard Gaussian.
\end{theorem}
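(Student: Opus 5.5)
Perturbation argument around the standard Gaussian $\gamma$. For the Gaussian, every partition is explicit. Parametrize a pair of perpendicular lines by the direction $\theta$ of the first line and the signed offsets $a,b$ of the two lines from the origin. The four quadrant masses are then smooth functions of $(\theta,a,b)$. By rotation invariance, $\theta$ is a free parameter, and the solution set $S_0$ for the prescribed cyclic masses $t,t,\tfrac12-t,\tfrac12-t$ is a union of circles in the configuration space.

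We count three free parameters and three independent mass equations (the four masses sum to one). So for the Gaussian the solution set is degenerate: it is a one-dimensional orbit of the rotation group rather than isolated points.

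For $0<t<1/4$ I expect $S_0$ to be a finite union of such orbits, up to the discrete relabellings. By symmetry of the pattern $t,t$ adjacent, the two lines are offset so that the two small pieces share a side. One should check that, modulo rotation, each orbit is transversally nondegenerate: the $2\times2$ Jacobian in $(a,b)$ of the reduced equations is invertible at the Gaussian solution. This is an explicit computation with the bivariate normal orthant function, since the lines are orthogonal and the coordinates decouple. In rotated coordinates the quadrant masses are products $\Phi(a)\Phi(b)$ and the like. The equations become $\Phi(-a)\Phi(-b)=t$, $\Phi(-a)\Phi(b)=t$, and so on. This forces $b=0$ and $\Phi(-a)=2t$, and nondegeneracy is immediate.

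Now perturb: take $\mu_\varepsilon$ with density $\gamma(x)(1+\varepsilon h(x))/Z$. Here $h$ is smooth, bounded with bounded derivatives, and even (for central symmetry). For small $\varepsilon$ the density stays positive and strongly log-concave, since the Hessian of $-\log$ moves by $O(\varepsilon)$ from the identity. It is also $C^2$-close to $\gamma$.

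By the implicit function theorem applied along each orbit, solutions for $\mu_\varepsilon$ lie in an $O(\varepsilon)$ neighbourhood of $S_0$. There $(a,b)$ is determined as a smooth function of $\theta$ by two of the equations. The remaining equation reduces to $F_\varepsilon(\theta)=\varepsilon\,g(\theta)+O(\varepsilon^2)=0$, where $g$ is a first-variation (Melnikov-type) functional. It is linear in $h$, obtained by integrating $h\gamma$ against indicator combinations of the Gaussian solution's regions at angle $\theta$.

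Solutions outside the neighbourhood are excluded by compactness. Masses tend to zero at infinity, so offsets stay bounded, and $\gamma$ has no solutions there.

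The key step is to choose $h$ so that $g(\theta)$ never vanishes on the circle. Here is where the constraint bites: $g$ might be forced to have zeros by a topological degree reason, which is exactly what yields existence theorems. A naive choice like an angular harmonic $h=r^2\cos 2\phi$ gives $g(\theta)\propto\cos(2\theta+c)$, which does vanish.

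So I would first write $g$ explicitly via the Gaussian solution. The configuration at angle $\theta$ is the first line through the origin and the second at offset $a_0=-\Phi^{-1}(2t)$, distinguishing the three relevant regions. I would look for the obstruction that the degree argument (for even $t$-patterns) fails. The equation set is not $\mathbb Z_4$-equivariant, since the pattern $t,t,\tfrac12-t,\tfrac12-t$ has symmetry only under a reflection. Hence $g$ need not have mean zero.

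Concretely, I would try to show that $\int_0^{2\pi}g(\theta)\,d\theta$ is a nonzero multiple of a radial moment of $h$. Then a radial $h$ (e.g. $h=\psi(|x|)$ with suitable sign profile) makes $g$ a nonzero constant. With radial $h$ the measure is still rotation invariant, so $g$ is constant. We need only show that constant is nonzero, i.e. that some radial reweighting breaks the one-parameter equation $\Phi_\mu$-analogue consistency. With radial density, the reduced problem is: the line through the origin halves (automatic by symmetry); the perpendicular line at offset $s$ must cut mass $2t$ on one side, and simultaneously each half of that strip must have mass $t$. For rotation-invariant measures, the first line through the origin automatically splits the strip equally too. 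Hence radial perturbations do not break it.

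So non-radial even $h$ is needed, and the main obstacle is showing that the resulting $g$ on the circle has no zero. I would attempt $h$ with an angular $\cos 4\phi$ component combined with a radial part tuned to make the constant term of $g$ dominate its $\cos4\theta$ term. If needed, I would instead exploit the fact that the line through the origin need not pass through the origin after perturbation: I would compute the second-order term and verify nonvanishing numerically-assisted for each $t$.
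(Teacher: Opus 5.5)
There is a genuine gap in your key step: no choice of $h$ can make the first variation $g$ nonvanishing on the circle. Your sentence ``hence $g$ need not have mean zero'' is false, and your own radial computation already shows why. The map $h\mapsto g$ is linear and commutes with rotations: since $\gamma$ is rotation invariant, replacing $h$ by $h\circ\mathcal R_{-\alpha}$ replaces $g(\theta)$ by $g(\theta-\alpha)$. So the angular mean of $g$ equals the first variation produced by the rotational average of $h$. That average is radial. As you observed, for a radial perturbation the reflection in the median line preserves the cap and swaps the two small pieces, so its first variation vanishes identically. Therefore $\int_0^{2\pi}g\,\dd\theta=0$ for every $h$. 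This is Proposition~\ref{prop:linearized-defect} of the paper, where the first variation of the defect appears as $L_ph$.

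In particular, no radial part of $h$ can make a constant term of $g$ dominate its $\cos4\theta$ term. The radial part contributes nothing, and a $\cos4\phi$ component contributes only a pure fourth harmonic. So either $g\equiv0$, or $g$ takes both signs. In the second case $F_\varepsilon=\varepsilon g+O(\varepsilon^2)$ takes both signs for every small $\varepsilon$ and vanishes somewhere. No first-order perturbation yields counterexamples near the Gaussian.

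The missing idea is a perturbation that is invisible at first order but produces a one-signed second-order defect. This needs a concrete mechanism, not a numerical check. Since $h$ is even, the median line always passes through the origin, so your fallback of letting that line move is not available. The second-order term must come from the motion of the cap threshold.

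The paper uses a $\cos2\phi$ perturbation $P_{2,q}g_1$ whose radial profile is tuned so that its half-cap defect satisfies $\mathbf D_{1,q}(q)=0$ (hence the first variation vanishes identically) but $\mathbf D'_{1,q}(q)=qD_qe_1\neq0$. The same perturbation moves the cap threshold by $b-q=\varepsilon\frac{q\kappa_q}{4}\cos2\theta+O_q(\varepsilon^2)$. The product of these gives the second-order term $\frac{K_q}{4}\varepsilon^2\cos^2 2\theta$, which is nonnegative but still vanishes where $\cos2\theta=0$. A fourth harmonic $\varepsilon^2P_{4,q}g_1$ is then added; it contributes $-\frac{K_q}{8}\varepsilon^2\cos4\theta$. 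The result is $\Delta_{\mu_{\varepsilon,q},p}=\frac{K_q}{8}\varepsilon^2+O_q(\varepsilon^3)$ uniformly in $\theta$, with $K_q=\frac{\sqrt2}{\pi}q^2e^{-3q^2/2}>0$. Closed-form coefficients like these are also what let the argument cover every $t\in(0,1/4)$. A numerically assisted verification for each $t$ cannot handle a continuum of targets.
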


Motivated by Sober\'on's Gaussian construction in dimension four, we look for a counterexample among small perturbations of the standard planar Gaussian. A nonzero first-order obstruction cannot persist for arbitrarily small perturbations: the first variation of the perpendicular defect has zero angular mean and therefore takes both signs unless it vanishes identically (Proposition~\ref{prop:linearized-defect}). For sufficiently small \(\varepsilon\), the full defect retains opposite signs at two orientations and hence vanishes somewhere between them. We therefore use a degree-two perturbation whose first-order contribution is hidden. The induced motion of the relevant cap boundary produces a nonzero second-order term, and a fourth harmonic cancels the direction-dependent part of that term.

The negative theorem does not mean that symmetry is irrelevant.  Our second result shows that a particular kind of symmetry restores the partition for all targets.

\begin{theorem}\label{thm:mainintro}
Let $\mu$ be a Borel probability measure on $\R^2$ assigning measure zero to every affine line.  Suppose that $\mu$ is invariant under an orientation-reversing affine involution.  Then, for every $0\leq t\leq 1/4$, there exist two orthogonal lines whose four regions, in cyclic order, have masses
\[
 t,\quad t,\quad\frac12-t,\quad\frac12-t.
\]
\end{theorem}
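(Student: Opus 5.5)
The plan is to exploit a pair of orthogonal directions that the linear part of the symmetry swaps, and then run an intermediate value argument on a one-parameter family of perpendicular configurations, using the symmetry to force a sign change. Write the involution as $\sigma(x)=Ax+b$ with $A^2=I$ and $\det A=-1$. The cyclic pattern $t,t,\frac12-t,\frac12-t$ means exactly this: one line $\ell_2$ cuts off a cap of mass $2t$, the perpendicular line $\ell_1$ halves $\mu$, and $\ell_1$ also halves that cap.

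\textbf{Step 1 (linear algebra).} Since $A=A^{-1}$, we have $AA^{T}=(A^{T}A)^{-1}$. Hence $A^TA$ and $AA^T$ have the same eigenvectors, with eigenvalues inverted. If $u,w$ is an orthonormal eigenbasis of $A^TA$ with eigenvalues $s^2, s^{-2}$, then $Au\parallel w$ and $Aw\parallel u$ when $s\neq1$. When $s=1$, $A$ is orthogonal and every orthogonal frame is mapped to an orthogonal frame. The same holds for the normal map $A^{-T}$. Thus $\sigma$ maps any pair of perpendicular lines with normals $u,w$ to a pair of perpendicular lines with normals $\pm w,\pm u$ (roles swapped if $s\ne 1$).

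\textbf{Step 2 (reduction to nice densities).} First I would assume $\mu$ has a smooth, strictly positive density that is still $\sigma$-invariant. To obtain this, convolve $\mu$ with a centered Gaussian of covariance $C=I+AA^T$. One checks that $ACA^T=C$, so $A_*k=k$, and therefore $\sigma_*(\mu*k)=(\sigma_*\mu)*(A_*k)=\mu*k$. Scaling $C$ down gives $\sigma$-invariant approximations converging weakly to $\mu$. For such densities the following lines are unique and depend continuously on the angle $\theta$:
\begin{itemize}
\item the line $\ell_2(\theta)$ with normal $e_\theta$ whose positive side $H_\theta$ has mass $2t$;
\item the halving line $\ell_1(\theta)$ with normal $e_{\theta+\pi/2}$.
\end{itemize}
Define $f(\theta)=\mu\bigl(H_\theta\cap\{x\cdot e_{\theta+\pi/2}> d(\theta)\}\bigr)-t$, which is continuous. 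At the end, pass to the limit. Because $\mu$ is a probability measure, the solution lines for the approximations stay in a compact set of lines (each cuts off mass at least $\min(t,\frac12-t)$), so a subsequence converges. Line-nullity of $\mu$ makes the masses of the four regions converge. The endpoint cases $t=0$ and $t=1/4$ are either trivial or follow from the same limit.

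\textbf{Step 3 (sign change, the main obstacle).} Take $\theta_0$ with $e_{\theta_0}=u$. By Step 1, $\sigma$ maps $\ell_2(\theta_0),\ell_1(\theta_0)$ to a perpendicular pair in which the image of $\ell_2(\theta_0)$ still bounds a cap of mass $2t$ and the image of $\ell_1(\theta_0)$ still halves $\mu$. By uniqueness this pair is the configuration at some angle $\theta_1$, with $\theta_1\in\theta_0+\{\pm\pi/2\}$ when $s\ne1$. The delicate point is bookkeeping of sides. Because $\sigma$ reverses orientation, the quarter of the cap lying to the "left" of the halving line should go to the quarter on the "right" in the image configuration. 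That would give $\mu(\text{image quarter})=2t-(f(\theta_0)+t)$, that is, $f(\theta_1)=-f(\theta_0)$. I would verify this by tracking the cyclic order of the four quadrants: an orientation-reversing map reverses their cyclic order while fixing the cap, so the two cap quarters are exchanged relative to the oriented normal $e_{\theta+\pi/2}$. If $s=1$, choose $u$ so that the same argument applies; for a genuine orthogonal reflection, take $u$ normal to the mirror axis. Once $f(\theta_1)=-f(\theta_0)$ is established, the intermediate value theorem on the arc from $\theta_0$ to $\theta_1$ gives a zero of $f$, which is the required partition.
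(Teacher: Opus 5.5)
Your proof is correct, and it takes a genuinely different and much more elementary route than the paper.

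\textbf{The paper's route.} It works globally, for regular measares: $C^1$ positive densities with decay, produced by truncation and symmetric mollification. It proves a monotonicity lemma giving a unique paired direction $F_p(u)$. It then integrates $\cot\gamma_p$ around the resulting paired cycle to define the action $J_p(\mu)$. A projective cocycle shows $J_p(A_*\mu)=\sgn(\det L)\,J_p(\mu)$. Hence an orientation-reversing symmetry forces $J_p(\mu)=0$, which is impossible if $\cot\gamma_p$ keeps a strict sign.

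\textbf{Your route.} You use a single frame: the singular frame of the linear part, equivalently the two bisectors of its $+1$ and $-1$ eigendirections. Both $A$ and $A^T$ swap this frame, so the symmetry carries the perpendicular configuration at $\theta_0$ onto the one at $\theta_1=\theta_0\pm\pi/2$.

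The side bookkeeping you flag does work. The pair $(A^Te_{\theta_0},A^Te_{\theta_0+\pi/2})$ is negatively oriented, and its first vector is a positive multiple of $e_{\theta_1}$. So the second is a negative multiple of $e_{\theta_1+\pi/2}$. Thus the quarter counted by $f(\theta_0)$ is carried to the cap quarter not counted by $f(\theta_1)$, which gives $f(\theta_1)=-f(\theta_0)$. One small slip: your displayed mass $2t-(f(\theta_0)+t)$ belongs to the complementary quarter, not the image quarter, but the conclusion is right.

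\textbf{What each buys.} Your route needs only uniqueness and continuity of quantiles. So the $A$-invariant Gaussian smoothing suffices without truncation or decay, and the monotonicity lemma, paired cycle and cocycle become unnecessary. It also locates a solution within a quarter-arc of cap normals. It shows at a glance why central symmetry fails, since there $f(\theta+\pi)=f(\theta)$. The paper's route pays more to get more: a signed affine invariant defined for every regular measure, with a sign law valid for all affine maps rather than only for symmetries. That invariant is tied to the Liouville form and to the realization questions about paired cycles.
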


The contrast with Theorem~\ref{thm:counterintro} is that central inversion preserves orientation, whereas the symmetry in Theorem~\ref{thm:mainintro} reverses it.  Starting from a measure and one direction, we choose a second, not necessarily perpendicular direction so that the four region masses have the desired values.  As the first direction turns, these pairs trace a closed curve in the space of direction pairs (Theorem \ref{thm:pairedmap}).  We then place a signed geometric invariant on that curve: an orientation-reversing affine symmetry changes its sign while preserving the measure (Proposition \ref{prop:affineJ}), forcing the invariant to vanish and hence forcing the curve to meet the perpendicular locus. For the strategy to work, we must first work with regular measures (see Section \ref{sec:monotonicity}) and then use them to approximate line-null measures (see Appendix \ref{app:line-null}).

This has direct consequences for Gr\"unbaum's convex-body problem.  Section~\ref{sec:convex} shows, in particular, that every convex body with an orientation-reversing affine automorphism satisfies the conjecture for every target (Corollary \ref{cor:body}); this includes all triangles (Corollary \ref{cor:triangle}) and all trapezoids (Corollary \ref{cor:trapezoid}). 

A common elementary issue underlies both main proofs: a boundary line is defined by a fixed-mass condition and therefore moves when either the measure is perturbed or the line direction is rotated.  Section~\ref{sec:variation} isolates the implicit-differentiation rule that tracks this motion, so that the later arguments can focus on the geometry it produces.

The finite problem also has a negative answer in general.  Standard approximation arguments connect continuous mass-partition statements with finite point-set versions under suitable boundary conventions; see Rold\'an-Pensado--Sober\'on \cite{RoldanSoberon2022} and, for the general discrete-geometric viewpoint, Matou\v{s}ek \cite{Matousek2002}.  Thus one can in principle discretize the continuous counterexample above.  A naive quantitative implementation, however, must approximate the relevant sector masses uniformly over a large family of pairs of lines and leads to an impractically large point set.  We instead search directly for a finite configuration.

\begin{theorem}\label{thm:discrete-counterexample}
There is a centrally symmetric set $P$ of $96$ integer points in $\R^2$, with no three collinear and no orthogonal $8$-partition. This remains true under the weak convention in which points on either cutting line may be assigned arbitrarily to adjacent sectors.
\end{theorem}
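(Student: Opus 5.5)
This is a finite, computer-assisted statement, so the plan is to give an explicit certificate: list the $96$ integer points, closed under $p\mapsto -p$, and verify the claimed properties by an exact finite computation. No three points being collinear is checked directly over all $\binom{96}{3}$ triples with integer determinants. The substantive task is to show that no pair of perpendicular lines yields cyclic counts $8,8,40,40$ (here $N=96$, $m=48$, $k=8$), even when points on the cutting lines may be assigned to either adjacent sector. Every such pair must be captured by finitely many combinatorial events, each checkable in exact integer arithmetic.

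The reduction to finitely many cases proceeds as follows. Fix a direction $\theta$ of the first line $\ell_1$; the second line $\ell_2$ has direction $\theta+\pi/2$. The combinatorial type of the partition depends only on the orders of $P$ projected onto $u_\theta$ and onto $u_\theta^\perp$, together with the two offsets. As $\theta$ ranges over $[0,\pi)$, these orders change only at the critical directions where two points share a projection on $u_\theta$ or on $u_\theta^\perp$, that is, at directions perpendicular or parallel to some $p_i-p_j$. There are at most $2\binom{96}{2}$ such directions. Between consecutive critical directions both orders are constant, so a single representative $\theta$ per open arc suffices, together with the critical directions themselves, where the weak convention matters.

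For each representative direction, choose the offsets. Let the points be sorted by their $u_\theta$-projections, so that $\ell_1$ splits them as $a$ versus $96-a$, and by their $u_\theta^\perp$-projections, so that $\ell_2$ splits them as $b$ versus $96-b$. The four quadrant counts are determined by $a$, $b$ and the $2\times 2$ contingency counts. We need a quadrant and its two neighbours to satisfy sizes $8,8,40,40$ in cyclic order. This forces $\{a,b\}$ to consist of a half-plane split of $16/80$ and one of $48/48$, since two adjacent sectors of total $16$ lie in one half-plane, and one pair of opposite sectors sums to $48$. So for each direction only a few values of $a,b$ need testing, in both roles, and for each we compute the contingency table with prefix counts.

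The weak convention and the critical directions are handled by enlarging the allowed offsets. When a line passes through points, those points may go to either side, so $a$ ranges over an interval. Equivalently, one allows all closed/open splits in lexicographic order at the critical $\theta$, and checks that no assignment achieves the counts. Central symmetry halves the directions to check, since $\theta$ and $\theta+\pi$ give the same set of lines.

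The main obstacle is not the verification, which is an $O(N^3\log N)$ exact computation, but finding the set $P$. For the search, I would start from a discretized perturbation in the spirit of Theorem~\ref{thm:counterintro}: a centrally symmetric sample shaped like a perturbed Gaussian with quadratic and fourth-harmonic modulation. I would then run a local search, such as simulated annealing or differential evolution, on integer coordinates while keeping central symmetry. The objective would be a robustness margin, namely the minimum over directions of the distance of the best achievable count vector from $(8,8,40,40)$. Once the margin is positive, I would round to integers, jitter to remove collinearities, and re-certify exactly with the check above.
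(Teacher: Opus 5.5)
\textbf{Verdict: genuine gap.} Your verification procedure matches the paper's, but your proposal never exhibits the point set, and that set is the whole content of the proof.

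Your certification scheme is essentially the paper's. Your observation that one line must split $48/48$ and the other $16/80$ is exactly the paper's reduction to the single condition $q(n;A,B)=|\Top_{48}(Jn)\cap\Top_{16}(n)|=8$. The paper likewise cuts the direction circle by rays parallel or perpendicular to the differences $p_i-p_j$, which gives $9{,}216$ chambers for its symmetric set. It samples each chamber once in exact integer arithmetic and treats the critical directions separately. Its search was also a centrally symmetric simulated annealing, with energy $\sum\max\{0,9-q\}^2$ pushing $q$ to one side of $8$.

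The gap is that your proposal stops where the proof begins. The statement is existential, and the paper's proof \emph{is} the witness. It consists of the $48$ integer representatives in Table~\ref{tab:discrete-points}, together with the result of the exact check: $q-8\in\{1,2\}$ in every chamber and for every admissible critical assignment. Nothing in your argument guarantees that your search ends with a positive margin at $N=96$, $k=8$. In particular, seeding it with a discretized version of Theorem~\ref{thm:counterintro} gives no such guarantee. That construction only yields a defect of order $\varepsilon^2$ for small $\varepsilon$, which is swamped at this size by the count resolution $1/96$ and by sampling noise. This is why the paper remarks that a quantitative discretization would need an impractically large point set. Until a specific $P$ is written down and certified, the theorem is not proved.

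A smaller but real point concerns the weak convention. At a critical direction it is not enough to move the cutoff along one lexicographic order, or to choose between closed and open half-planes. The count $q$ depends on \emph{which} tied points go to each side, not just how many. The choices at the two cutoffs are independent, and a point lying on both lines may be placed in any of the four sectors. When ties occur at both cutoffs in the same direction, the two adjacent open chambers realize only some of these combinations. So you must enumerate all admissible choices of $A$ and $B$, as the paper does. Since no three points are collinear, each line carries at most two points, so this is only a handful of extra cases per critical direction.
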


\begin{figure}[ht]
\centering
\includegraphics[width=0.75\textwidth]{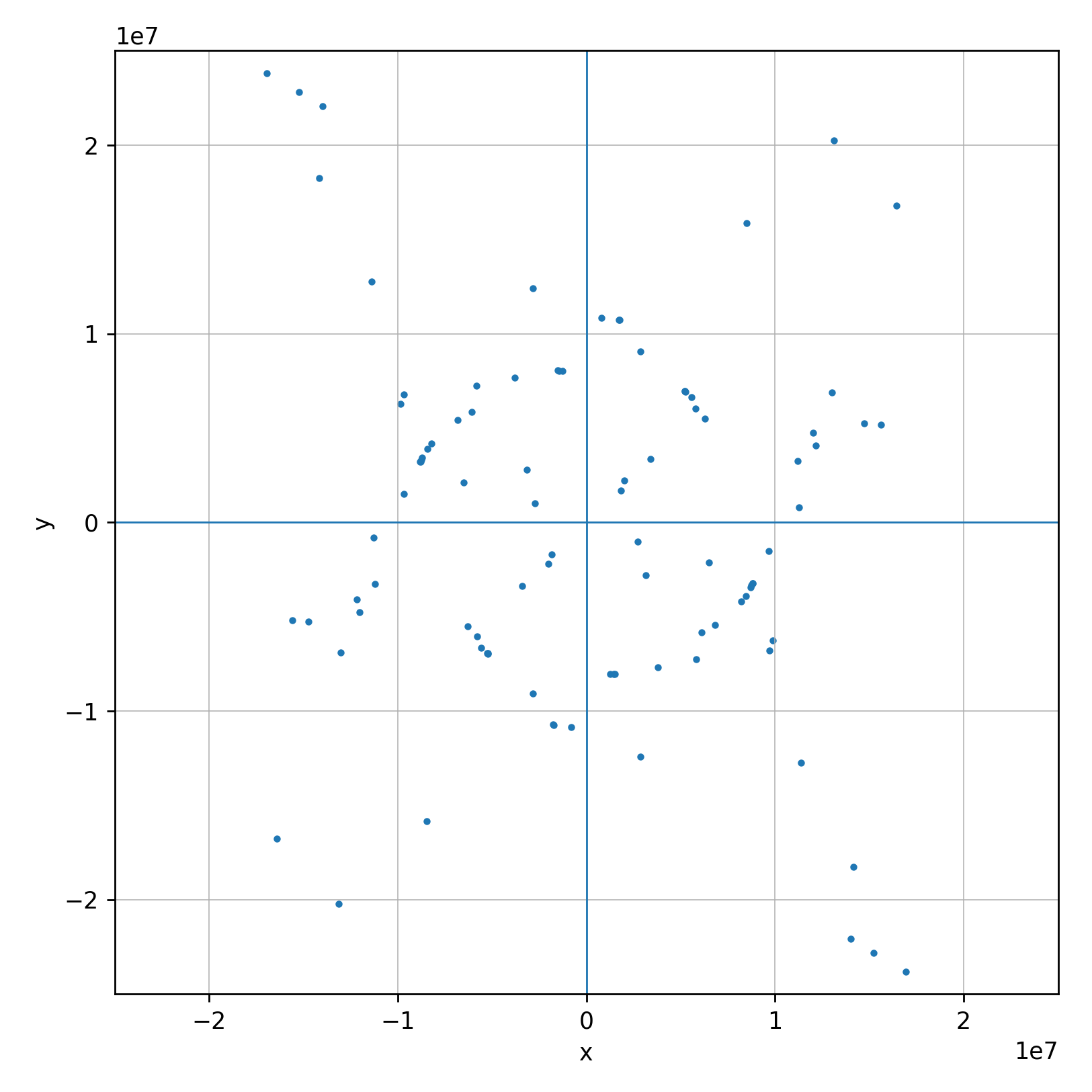}
\caption{A centrally symmetric $96$-point counterexample for $k=8$.}
\label{fig:discrete-counterexample}
\end{figure}

The example is shown in Figure \ref{fig:discrete-counterexample}, and it was obtained by a simulated-annealing search, documented in Appendix \ref{app:discrete-counterexample}. The code used for the simulated-annealing search and the subsequent exact verification is available upon request.

Finally, the reversing-symmetry theorem gives a complementary positive discrete class.  Because the result is obtained by smoothing atomic measures and taking a limit, the resulting statement leaves points on the cutting lines unassigned to the open sectors (Appendix \ref{app:line-null}).

\begin{corollary}\label{cor:discrete}
Let $P$ be a finite point multiset in $\R^2$, invariant as a multiset under an orientation-reversing affine involution, and write $n=|P|$.  For every integer $k$ with
\[
 0\le k\le\left\lfloor\frac n4\right\rfloor,
\]
there are two perpendicular lines whose four open sectors contain, in cyclic order, at most
\[
 k,\quad k,\quad
 \left\lfloor\frac{n-2k}{2}\right\rfloor,\quad
 \left\lfloor\frac{n-2k}{2}\right\rfloor
\]
points of $P$.
\end{corollary}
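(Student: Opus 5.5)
The plan is to derive Corollary~\ref{cor:discrete} from Theorem~\ref{thm:mainintro} by smoothing the atomic measure symmetrically and passing to a limit. We may assume $n\ge 1$. Let $\sigma(x)=Ax+b$ be the orientation-reversing affine involution preserving $P$, and let $\nu=\frac1n\sum_{p\in P}\delta_p$, counted with multiplicity. Set $c=\frac12(x_0+\sigma(x_0))$ for any $x_0$; then $\sigma(c)=c$, and $\sigma$ acts linearly as $A$ about $c$. Since $A^2=I$ and $\det A=-1$, the matrix $A$ is diagonalizable with eigenvalues $\pm1$. Hence there is an $A$-invariant inner product, namely $\langle x,y\rangle'=\langle x,y\rangle+\langle Ax,Ay\rangle$.

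For $\varepsilon>0$, let $\rho_\varepsilon$ be a Gaussian density of scale $\varepsilon$, centered at $0$, and isotropic for $\langle\cdot,\cdot\rangle'$. Then $A$ preserves $\rho_\varepsilon$. Put $\mu_\varepsilon=\nu*\rho_\varepsilon$. This is a smooth, strictly positive probability measure, so it is line-null. It is invariant under $\sigma$, because $\sigma_*(\delta_p*\rho_\varepsilon)=\delta_{\sigma(p)}*\rho_\varepsilon$ and $\sigma$ permutes $P$ as a multiset. Take $t=k/n\le 1/4$. Theorem~\ref{thm:mainintro} then gives perpendicular lines $\ell_1^\varepsilon,\ell_2^\varepsilon$ whose open sectors $S_1^\varepsilon,\dots,S_4^\varepsilon$, in cyclic order, have $\mu_\varepsilon$-masses $t,t,\frac12-t,\frac12-t$.

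Next comes compactness. Each line is parametrized by a unit normal and an offset. The offsets stay bounded as $\varepsilon\to0$. Indeed, if a line drifted to infinity, one of its two half-planes would have $\mu_\varepsilon$-mass tending to $0$. But each half-plane bounded by $\ell_1^\varepsilon$ or $\ell_2^\varepsilon$ is a union of two adjacent sectors, with mass at least $\frac12$; the adjacent pairs are $t+(\frac12-t)$ or $t+t$ and $(\frac12-t)+(\frac12-t)$, and the smaller half is exactly $\frac12$. So the masses cannot tend to $0$. Pass to a subsequence along which the normals and offsets converge, giving perpendicular lines $\ell_1,\ell_2$ with open sectors $S_i$. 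For a point $p$ in the open sector $S_i$, there is $r>0$ with $B(p,r)\subset S_i^\varepsilon$ for all small $\varepsilon$ in the subsequence. Then $\delta_p*\rho_\varepsilon(S_i^\varepsilon)\to1$. Summing over the points in $S_i$ gives
\[
\frac{|P\cap S_i|}{n}\le\liminf_{\varepsilon\to0}\mu_\varepsilon(S_i^\varepsilon).
\]
Multiplying by $n$, the counts in $S_1,S_2$ are at most $k$, and the counts in $S_3,S_4$ are at most $n/2-k$. Since the counts are integers, they are at most $\lfloor (n-2k)/2\rfloor$. The cyclic order survives because the labeling of sectors is continuous in the limit.

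The main obstacle is the symmetric smoothing, that is, making the mollifier invariant under the linear part $A$, which need not be orthogonal. The invariant inner product $\langle\cdot,\cdot\rangle'$ handles this. The rest is routine: one also has to check that the limiting lines genuinely exist, which is the boundedness argument above, and that no mass escapes to infinity. The latter holds because $\mu_\varepsilon$ has uniformly tight tails.
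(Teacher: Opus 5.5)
Your argument is essentially the paper's proof: you use Gaussian bumps isotropic for an inner product invariant under the linear part of the involution, apply the symmetric-measure theorem with $t=k/n$, take a convergent subsequence of perpendicular pairs, observe that every point strictly inside a limiting open sector eventually contributes its full mass $1/n$, and finish with the integer floor. One slip to fix: the half-planes of the cap line have masses $2t$ and $1-2t$, not at least $\frac12$, but the offsets stay bounded anyway because $2t=2k/n>0$ is fixed for $k\ge1$; the case $k=0$, which the paper also passes over, is trivial using a halving line and a distant perpendicular line.
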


\subsection{Organization}
Section~\ref{sec:variation} fixes the common notation and isolates the implicit-variation calculation used throughout the paper.  Section~\ref{sec:first-order} explains why a nonzero first-order perturbation cannot give counterexamples arbitrarily close to the Gaussian.  Section~\ref{sec:counterexample} constructs the counterexamples by passing to second order.  Section~\ref{sec:monotonicity} studies the full zero set of the same imbalance and constructs the uniquely determined paired direction.  Section~\ref{sec:chirality} introduces the partition action and proves its affine sign law and the reversing-symmetry theorem for regular measures.  Section~\ref{sec:convex} applies that theorem to Gr\"unbaum's convex-body problem.  Section~\ref{sec:discussion} compares the two mechanisms with earlier approaches, explains the Liouville interpretation of the action, and records the main remaining questions.  Appendix~\ref{app:line-null} contains the approximation arguments; Appendix~\ref{app:discrete-counterexample} gives the coordinates of the finite example and its exact certificate.

\section{Notation, background conventions, and variation}\label{sec:variation}
Throughout the paper, $t\in[0,1/4]$ denotes the smaller target mass.  It is convenient to use
\[
 p=2t\in[0,1/2].
\]

At the endpoint \(t=0\), we allow the degenerate partition consisting of a single halving line, and the case $t=1/4$ was settled by Gr\"unbaum in  \cite{Grunbaum1960}. Thus we may assume \(0<t< 1/4\).

For $u=(u_1,u_2)\in S^1$, write $u^\perp=(-u_2,u_1)$ and for $\theta \in [0,2\pi)$, write $u_\theta=(\cos\theta,\sin\theta)$. Let $\mathcal R_\varphi$ denote counterclockwise rotation through $\varphi$.  Thus $u_\theta^\perp=u_{\theta+\pi/2}$.

Let $\mu$ have a strictly positive density.  For $u,v\in S^1$ define the \textit{median half-plane} and the \textit{$p$-cap}
\[
 H_u^+=\{x:\langle x,u\rangle\ge m(u)\},\qquad \mu(H_u^+)=\frac12,
\]
\[
 C_{v,p}=\{x:\langle x,v\rangle\ge q_p(v)\},\qquad \mu(C_{v,p})=p.
\]
Note that here we are implicitly defining the \textit{quantiles} $m(u)$ and $q_p(v)$ as well. Our basic two-direction \textit{imbalance} is
\begin{equation}\label{eq:master-imbalance}
 \Phi_{\mu,p}(u,v):=\mu(H_u^+\cap C_{v,p})-\frac p2.
\end{equation}
Thus $\Phi_{\mu,p}(u,v)=0$ precisely when the median with normal $u$ bisects the chosen $p$-cap with normal $v$.  The \textit{orthogonality defect} is the restriction
\[
 \Delta_{\mu,p}(u):=\Phi_{\mu,p}(u,u^\perp).
\]
A zero of $\Delta_{\mu,p}$ is exactly a perpendicular partition with cyclic masses $p/2,p/2,(1-p)/2,(1-p)/2$; conversely every such partition is represented by one of the two choices of side for the $p$-cap.

For calculations we write
\[
 U_\theta=\langle(x,y),u_\theta\rangle,\qquad
 V_\theta=\langle(x,y),u_\theta^\perp\rangle,
\]
and abbreviate $a_\theta=m(u_\theta)$ and $b_\theta=q_p(u_\theta^\perp)$.  Then
\[
 \Delta_{\mu,p}(\theta)
 =\mu\{U_\theta\ge a_\theta,\ V_\theta\ge b_\theta\}-\frac p2.
\]
When the density is written in the $(U_\theta,V_\theta)$ coordinates, this is an ordinary iterated integral over the moving quadrant $[a_\theta,\infty)\times[b_\theta,\infty)$.

The same elementary calculus maneuver will occur twice.  Suppose a scalar threshold $q=q(s)$ is determined implicitly by a fixed-mass equation
\[
 \mathcal A(s,q(s))=p,
\]
with $\mathcal A_q\ne0$.  The implicit-function theorem gives
\begin{equation}\label{eq:variation-constraint}
 q'(s)=-\frac{\mathcal A_s}{\mathcal A_q}.
\end{equation}
Then, for a quantity of interest $\mathcal B(s,q(s))$, the chain rule gives
\begin{equation}\label{eq:variation-chain}
 \frac{\dd}{\dd s}\mathcal B(s,q(s))
 =\mathcal B_s-\frac{\mathcal B_q\mathcal A_s}{\mathcal A_q}.
\end{equation}
In Sections~\ref{sec:first-order} and~\ref{sec:counterexample}, the parameter $s$ is a perturbation size and the moving thresholds are quantiles of the perturbed density.  In Section~\ref{sec:monotonicity}, $s$ is a rotation angle and the moving threshold translates the rotating cap boundary so that its mass remains $p$.  This common ``differentiate the constraint, solve for the quantile motion, substitute into the desired derivative'' viewpoint is the variation principle behind both halves of the paper.

For the harmonic calculations, write
\[
 \mathbf c_k(\theta)=\begin{pmatrix}\cos k\theta\\-\sin k\theta\end{pmatrix}.
\]

We use standard measure-theoretic notation for weak convergence and pushforward measures, and standard differential-geometric notation for pullbacks of forms, exterior differentiation, and integration of $1$-forms along curves.  For general references see Folland \cite{Folland1999} and Lee \cite{Lee2012}.  For the probability background behind the Gaussian and quantile calculations in Sections~\ref{sec:first-order} and~\ref{sec:counterexample}, see Billingsley \cite{Billingsley1999} and van der Vaart \cite{vanDerVaart1998}; the Gaussian/Hermite identities used there may also be organized as in Thangavelu \cite{Thangavelu1993}.  The particular identities needed in the proof are recalled when they arise.

\section{Pure first-order perturbations cannot work}\label{sec:first-order}

Let $\mu_0$ be the standard planar Gaussian and write
\[
 \phi(s)=\frac{e^{-s^2/2}}{\sqrt{2\pi}},\qquad
 G(s)=\int_s^\infty\phi(v)\,\dd v.
\]
Consider $\dd\mu_\varepsilon=(1+\varepsilon h)\dd\mu_0$, where $h$ is smooth, bounded, and has zero $\mu_0$-mean.  For fixed $p$, choose $q$ with $G(q)=p$.

\begin{proposition}\label{prop:linearized-defect}
Uniformly in $\theta$,
\[
 \Delta_{\mu_\varepsilon,p}(\theta)=\varepsilon L_ph(\theta)+O(\varepsilon^2),
\]
where
\begin{equation}\label{eq:neg-linear}
 L_ph(\theta)=\mathbb E_{\mu_0}\!\left[
 h(X)(\mathbf 1_{U_\theta\ge0}-\tfrac12)
       (\mathbf 1_{V_\theta\ge q}-p)\right].
\end{equation}
Moreover
\[
 \int_0^{2\pi}L_ph(\theta)\,\dd\theta=0.
\]
\end{proposition}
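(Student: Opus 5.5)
The plan has two parts: a first-order expansion of $\Delta_{\mu_\varepsilon,p}(\theta)$ in $\varepsilon$ using the variation rule \eqref{eq:variation-constraint}--\eqref{eq:variation-chain}, and then a Fubini argument showing that the angular mean of $L_ph$ vanishes pointwise in $x$.

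\textbf{The linear term.} Fix $\theta$ and work in the rotated coordinates $(U_\theta,V_\theta)$, which under $\mu_0$ are independent standard normals.

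At $\varepsilon=0$ the quantiles are $a_\theta=0$ and $b_\theta=q$. For $\varepsilon\neq0$ they are defined implicitly by
\[
\mu_\varepsilon\{U_\theta\ge a\}=\tfrac12,\qquad \mu_\varepsilon\{V_\theta\ge b\}=p .
\]
Applying \eqref{eq:variation-constraint} with $s=\varepsilon$ gives
\[
\partial_\varepsilon a|_{0}=\frac{\mathbb E[h\mathbf 1_{U_\theta\ge0}]}{\phi(0)},\qquad
\partial_\varepsilon b|_{0}=\frac{\mathbb E[h\mathbf 1_{V_\theta\ge q}]}{\phi(q)} .
\]
Next take $\mathcal B=\mu_\varepsilon\{U_\theta\ge a,\ V_\theta\ge b\}$. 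By independence of $U_\theta,V_\theta$ under $\mu_0$, its partial derivatives at $(0,0,q)$ are
\[
\mathcal B_\varepsilon=\mathbb E[h\mathbf 1_{U_\theta\ge0}\mathbf 1_{V_\theta\ge q}],\qquad
\mathcal B_a=-\phi(0)\,p,\qquad
\mathcal B_b=-\tfrac12\phi(q).
\]
The chain rule \eqref{eq:variation-chain}, now with two moving thresholds, then gives
\[
\partial_\varepsilon\Delta|_{0}=\mathbb E[h\mathbf 1_{U\ge0}\mathbf 1_{V\ge q}]-p\,\mathbb E[h\mathbf 1_{U\ge0}]-\tfrac12\mathbb E[h\mathbf 1_{V\ge q}].
\]
This equals \eqref{eq:neg-linear}. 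To see it, expand the product $(\mathbf 1_{U\ge0}-\tfrac12)(\mathbf 1_{V\ge q}-p)$; the extra constant term $\tfrac p2\mathbb E[h]$ vanishes because $h$ has zero $\mu_0$-mean.

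\textbf{Uniformity of the remainder.} The functions $a_\theta(\varepsilon)$, $b_\theta(\varepsilon)$ and $\Delta$ are smooth in $\varepsilon$ near $0$. Their second $\varepsilon$-derivatives are continuous in $(\theta,\varepsilon)$, since they are built from integrals of the bounded $h$ against Gaussian densities over half-planes and quadrants with smoothly varying data. The denominators $\phi(0)$ and $\phi(q)$ stay away from zero. Continuity on the compact parameter set $\theta\in[0,2\pi]$, $|\varepsilon|\le\varepsilon_0$ then makes Taylor's remainder $O(\varepsilon^2)$ uniformly in $\theta$.

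This uniformity bookkeeping is the only step I expect to need care, mainly to confirm joint continuity of the second derivatives. It is routine rather than a real obstacle.

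\textbf{Zero angular mean.} Since $h$ is bounded, Fubini gives
\[
\int_0^{2\pi}L_ph\,\dd\theta=\mathbb E_{\mu_0}[h(X)\,g(X)],\qquad
g(x)=\int_0^{2\pi}\bigl(\mathbf 1_{U_\theta\ge0}-\tfrac12\bigr)\bigl(\mathbf 1_{V_\theta\ge q}-p\bigr)\dd\theta .
\]
I will show that $g\equiv0$ pointwise. Write $x=r(\cos\alpha,\sin\alpha)$ and substitute $\psi=\alpha-\theta$, so that $U_\theta=r\cos\psi$ and $V_\theta=r\sin\psi$.

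The $-p$ part of the second factor contributes $-p\int(\mathbf 1_{\cos\psi\ge0}-\tfrac12)\dd\psi=0$.

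The remaining term is $\int(\mathbf 1_{\cos\psi\ge0}-\tfrac12)\mathbf 1_{r\sin\psi\ge q}\dd\psi$. Consider the reflection $\psi\mapsto\pi-\psi$. It preserves $\sin\psi$, hence preserves the set $\{r\sin\psi\ge q\}$, and it flips the sign of $\cos\psi$. Away from the null set $\{\cos\psi=0\}$, this makes $\mathbf 1_{\cos\psi\ge0}-\tfrac12$ odd under the reflection. Hence this integral also vanishes, so $g\equiv0$ and $\int_0^{2\pi}L_ph\,\dd\theta=0$.
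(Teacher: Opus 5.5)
Your proposal is correct and follows essentially the same route as the paper: the same implicit quantile speeds $A/\phi(0)$ and $B/\phi(q)$, the same chain-rule computation giving $W-pA-B/2$, and, for the zero mean, the same pointwise-in-$x$ cancellation followed by Fubini. Your reflection $\psi\mapsto\pi-\psi$ is an explicit form of the paper's remark that the angles with $V_\theta(x)\ge q$ split equally between $U_\theta(x)\ge0$ and $U_\theta(x)<0$. One small caution: continuity of the second $\varepsilon$-derivatives needs some control of $\nabla h$ (which holds for the perturbations used later), but the uniform $O(\varepsilon^2)$ already follows because the half-plane and quadrant integrals of $h\,\dd\mu_0$ are Lipschitz in the thresholds, uniformly in $\theta$.
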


\begin{proof}
Fix $\theta$ and abbreviate $U=U_\theta$, $V=V_\theta$.  Let $a_\varepsilon$ and $b_\varepsilon$ be the perturbed median and cap thresholds, so that
\[
 \mu_\varepsilon\{U\ge a_\varepsilon\}=\frac12,
 \qquad
 \mu_\varepsilon\{V\ge b_\varepsilon\}=p,
 \qquad
 a_0=0,\quad b_0=q.
\]
Put
\[
 A=\int_{U\ge0}h\,\dd\mu_0,\qquad
 B=\int_{V\ge q}h\,\dd\mu_0,\qquad
 W=\int_{U\ge0,V\ge q}h\,\dd\mu_0.
\]
Differentiating the two fixed-mass equations at $\varepsilon=0$ gives
\begin{equation}\label{eq:quantile-speeds}
 a_0'=\frac{A}{\phi(0)},\qquad
 b_0'=\frac{B}{\phi(q)}.
\end{equation}

Now let
\[
 F(a,b,\varepsilon)
 =\mu_\varepsilon\{U\ge a,V\ge b\}.
\]
The defect is $F(a_\varepsilon,b_\varepsilon,\varepsilon)-p/2$.  At $(a,b,\varepsilon)=(0,q,0)$,
\[
 F_\varepsilon=W,\qquad
 F_a=-p\phi(0),\qquad
 F_b=-\frac12\phi(q).
\]
Hence the chain rule and \eqref{eq:quantile-speeds} give
\[
 \left.\frac{\dd}{\dd\varepsilon}
 F(a_\varepsilon,b_\varepsilon,\varepsilon)\right|_{\varepsilon=0}
 =W-pA-\frac B2,
\]
which is exactly \eqref{eq:neg-linear} after expanding the two centered indicators and using $\int h\,\dd\mu_0=0$. Uniformity follows from the parameter-dependent implicit-function theorem on the compact orientation circle.

For the mean-zero statement, use \eqref{eq:neg-linear} directly. For each fixed \(x\), the four possible values of

$$ \bigl(\mathbf 1_{U_\theta(x)\ge0}-\tfrac12\bigr) \bigl(\mathbf 1_{V_\theta(x)\ge q}-p\bigr) $$
are
$$ \frac{1-p}{2},\quad -\frac p2,\quad \frac{p-1}{2},\quad \frac p2, $$
according to the four regions determined by \(U_\theta(x)=0\) and \(V_\theta(x)=q\). As \(\theta\) runs once around the circle, the angles for which \(V_\theta(x)\ge q\) are divided equally between \(U_\theta(x)\ge0\) and \(U_\theta(x)<0\); the same is therefore true when \(V_\theta(x)<q\). Hence the two opposite contributions cancel in each case, and
$$ \int_0^{2\pi} \bigl(\mathbf 1_{U_\theta(x)\ge0}-\tfrac12\bigr) \bigl(\mathbf 1_{V_\theta(x)\ge q}-p\bigr)\,\dd\theta=0. $$

Integrating in \(x\) and applying Fubini gives the desired equality.

\end{proof}

Thus any nonzero first-order orthogonality defect changes sign.  If $L_ph\not\equiv0$, its zero angular mean gives directions $\theta_+$ and $\theta_-$ with $L_ph(\theta_+)>0$ and $L_ph(\theta_-)<0$.  For sufficiently small $\varepsilon$, the full defect has the same respective signs at these two directions, and hence vanishes for some orientation between them.  Therefore, to construct counterexamples for arbitrarily small $\varepsilon$, we must arrange $L_ph\equiv0$ and obtain the obstruction from higher-order terms.

\section{The negative result: a family of counterexamples}\label{sec:counterexample}

Now we focus on constructing the counterexamples. Fix $0<p<1/2$, let $q>0$ be determined by $G(q)=p$, and put $r^2=x^2+y^2$.  Define
\[
 g_0(x,y)=\frac1{2\pi}e^{-r^2/2},\qquad
 g_1(x,y)=\frac1\pi e^{-r^2}.
\]
The narrower Gaussian $g_1$ is useful because
\[
 \frac{g_1}{g_0}=2e^{-r^2/2};
\]
therefore a polynomial times $g_1$, divided by $g_0$, is a polynomial times $e^{-r^2/2}$ and is bounded together with the finitely many derivatives needed below.

For $\sigma=r^2$ set
\begin{align*}
 A_q(\sigma)&=2(\sigma-q^2)^2-8(\sigma-q^2)+4,\\
 B_q(\sigma)&=2(\sigma-q^2)^2-9(\sigma-q^2)+6.
\end{align*}
These polynomials are chosen so that the degree-two perturbation is invisible to the orthogonality defect at first order at the unperturbed threshold $b=q$, while it still moves the cap threshold; differentiating the hidden defect in that moved direction then produces the desired second-order term.  Let
\begin{align*}
 P_{2,q}(x,y)
 &= (x^2-y^2)A_q(r^2)-2xyB_q(r^2),\\
 \kappa_q&=\sqrt2\,e^{-q^2/2}\\
 P_{4,q}(x,y)&=\frac{\kappa_q}{2}xy(x^2-y^2).
\end{align*}
For $\varepsilon>0$ set
\begin{equation}\label{eq:neg-density}
 \rho_{\varepsilon,q}
 =g_0+\varepsilon P_{2,q}g_1+\varepsilon^2P_{4,q}g_1,
 \qquad
 \dd\mu_{\varepsilon,q}=\rho_{\varepsilon,q}\,\dd x\,\dd y.
\end{equation}

\begin{theorem}[Gaussian counterexamples at every target]\label{thm:explicit-counterexample}
For every fixed $q>0$ there exists $\varepsilon_0(q)>0$ such that, for $0<\varepsilon<\varepsilon_0(q)$, \eqref{eq:neg-density} is a smooth, strictly positive, centrally symmetric, strongly log-concave probability density.  Moreover, uniformly in $\theta$,
\begin{equation}\label{eq:neg-asymptotic}
 \Delta_{\mu_{\varepsilon,q},p}(\theta)
 =\frac{K_q}{8}\varepsilon^2+O_q(\varepsilon^3),
 \qquad
 K_q=\frac{\sqrt2}{\pi}q^2e^{-3q^2/2}>0.
\end{equation}
Consequently, after decreasing $\varepsilon_0(q)$ if necessary,
\begin{equation}\label{eq:neg-gap}
 \Delta_{\mu_{\varepsilon,q},p}(\theta)>0
 \qquad\text{for every $\theta$ and }0<\varepsilon<\varepsilon_0(q),
\end{equation}
so no perpendicular pair realizes the cyclic masses $p/2,p/2,(1-p)/2,(1-p)/2$.
\end{theorem}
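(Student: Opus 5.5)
We prove the statement in two parts. The first covers the qualitative properties of the density. The second is an expansion of the defect to second order in $\varepsilon$.

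\emph{Qualitative properties.} Write $\rho_{\varepsilon,q}=g_0(1+\varepsilon h_1+\varepsilon^2 h_2)$ with $h_j=P_{j,q}g_1/g_0=2P_{j,q}e^{-r^2/2}$. Each $h_j$ is a polynomial times a Gaussian, so it is bounded with all derivatives bounded. This gives strict positivity for small $\varepsilon$.

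\begin{itemize}
\item \emph{Central symmetry.} $P_{2,q}$ and $P_{4,q}$ are even polynomials, so the density is centrally symmetric.
\item \emph{Total mass.} Both perturbations are angular harmonics of order $2$ and $4$ times radial functions, so they integrate to zero and $\rho_{\varepsilon,q}$ is a probability density.
\item \emph{Strong log-concavity.} We have $-\nabla^2\log\rho=I-\nabla^2\log(1+\varepsilon h_1+\varepsilon^2h_2)$. The second term is $O(\varepsilon)$ uniformly, since $h_j$ and its first two derivatives are bounded and $1+\cdots$ is bounded below. So $-\nabla^2\log\rho\succeq \tfrac12 I$.
\item \emph{Closeness to the Gaussian.} The $C^2$ distance to $g_0$ is $O(\varepsilon)$.
\end{itemize}

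\emph{Second-order expansion.} We follow Proposition~\ref{prop:linearized-defect}, now carried to second order. Fix $\theta$ and let $a_\varepsilon,b_\varepsilon$ be the perturbed thresholds. Set $F(a,b,\varepsilon)=\mu_{\varepsilon,q}\{U_\theta\ge a,V_\theta\ge b\}$. Differentiating the two quantile constraints twice, as in \eqref{eq:variation-constraint}, gives $a_0',b_0',a_0'',b_0''$. Substituting into the second-order Taylor expansion of $F(a_\varepsilon,b_\varepsilon,\varepsilon)$ then gives $\Delta=\varepsilon L_ph_1(\theta)+\tfrac{\varepsilon^2}{2}D_2(\theta)+O(\varepsilon^3)$.

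The remainder is uniform in $\theta$ for two reasons. All derivatives are integrals of smooth, Gaussian-decaying functions over quadrants. Also, the implicit-function theorem applies uniformly on the compact circle, because $\phi(0)$ and $\phi(q)$ are positive.

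\emph{Harmonic bookkeeping.} Write $P_{2,q}=r^2\,\Re(e^{2i\varphi}\cdot\text{radial})$ in polar coordinates, as a combination of $\cos 2\varphi$ and $\sin2\varphi$ with the radial weights $A_q$ and $B_q$. Rotating into $(U_\theta,V_\theta)$ coordinates shows the following:

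\begin{itemize}
\item $A,B,W$ are of the form $\langle c,\mathbf c_2(\theta)\rangle$ for fixed vectors $c$, with $c$ given by one-dimensional Gaussian integrals.
\item $A=0$: the median half-plane integral of a degree-two harmonic vanishes by the symmetry $U\mapsto-U$ applied to the $\sin$ part and radial integration of the $\cos$ part. Checking this against $A_q,B_q$ is part of the calculation.
\item $L_ph_1\equiv0$: by Proposition~\ref{prop:linearized-defect} this reduces to $W-B/2=0$, and the coefficients of $A_q,B_q$ are designed to kill exactly this. I would verify it by computing $\int_{V\ge q}$ and $\int_{U\ge0,V\ge q}$ of $(x^2-y^2)\sigma^j g_1$ and $xy\,\sigma^jg_1$ in closed form, using $e^{-q^2}$-type Gaussian tail identities.
\item $b_0'$ is nonzero, of the form $\beta\,\mathbf c_2(\theta)$-type.
\end{itemize}

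\emph{Second-order term.} $D_2$ contains several pieces:

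\begin{itemize}
\item $F_{bb}b_0'^2$ and $F_{ab}a_0'b_0'$;
\item $2F_{b\varepsilon}b_0'$, the derivative of the hidden first-order defect in the moved cap direction;
\item the second-order quantile terms;
\item $2\,\mathbb E[h_2\cdots]$, the linear response to $P_{4,q}$, which equals $2L_ph_2(\theta)$ and is a pure fourth harmonic.
\end{itemize}

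Products of two second-harmonic quantities equal a constant plus a $4\theta$ harmonic. So $D_2(\theta)=K_q/4+(\text{fourth harmonic from }h_1)+2L_ph_2(\theta)$. The constant $\kappa_q$ in $P_{4,q}$ is chosen so that the two fourth harmonics cancel. Checking this requires $L_ph_2$ for $xy(x^2-y^2)g_1$, which is again an explicit quadrant integral.

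\emph{Main obstacle.} The main obstacle is the explicit evaluation of these Gaussian quadrant integrals. I would organize them via $\int_{V\ge q}(\cdot)$ of Hermite-type polynomials against $e^{-r^2}$. This produces factors of $e^{-q^2}$, which combine with $\phi(q)^{-1}$ to give the $e^{-3q^2/2}$ in $K_q$.

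\emph{Conclusion.} Once \eqref{eq:neg-asymptotic} is established with $K_q>0$, take $\varepsilon$ small enough that $O_q(\varepsilon^3)<K_q\varepsilon^2/16$. This gives \eqref{eq:neg-gap} for every $\theta$. By the zero-set characterization of $\Delta$ in Section~\ref{sec:variation}, including both choices of cap side (they correspond to $\theta$ and $\theta+\pi$), no perpendicular pair realizes the cyclic masses $p/2,p/2,(1-p)/2,(1-p)/2$.
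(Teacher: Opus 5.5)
Your skeleton matches the paper's argument. The median stays at the origin, the degree-two term has vanishing first-order defect, and the cap threshold moves at first order like $\cos2\theta$. Pairing that motion with the threshold-derivative of the hidden defect gives a constant plus a fourth harmonic, and $P_{4,q}$ cancels the fourth harmonic.

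\textbf{The gap.} None of the identities the theorem actually rests on is established. You assert three things:
\begin{itemize}
\item that $A_q,B_q$ make $W-B/2$ vanish;
\item that $\kappa_q$ makes the two fourth harmonics cancel;
\item that the constant term equals $K_q/4$, which you simply read off from the statement.
\end{itemize}
The theorem \emph{is} the claim that these quantities come out this way. The sign of the constant is exactly what makes $\mu_{\varepsilon,q}$ a counterexample rather than a measure with perpendicular cuts. Postponing these computations as ``the main obstacle'' leaves the only nontrivial step unproved.

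There is also a bookkeeping hazard in your list. $F_{\varepsilon\varepsilon}$ by itself is twice the quadrant integral of $h_2$, not $2L_ph_2$. Likewise, $2F_{b\varepsilon}b_0'$ is not yet the derivative of the hidden defect. The missing half-cap corrections sit inside $F_bb_0''$, so counting both the ``second-order quantile terms'' and ``$2L_ph_2$'' would double count.

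\textbf{How the paper closes it.} Central symmetry gives $a_\varepsilon\equiv0$ exactly, and $\int_{\{U_\theta\ge0,\,V_\theta\ge b\}}g_0=\tfrac12G(b)$. Subtracting half of the exact cap constraint from the quadrant mass therefore yields the exact identity
\[
 \Delta_{\mu_{\varepsilon,q},p}(\theta)=\varepsilon\,\mathbf c_2(\theta)^T\mathbf D_{1,q}(b)+\varepsilon^2\,\mathbf c_4(\theta)^T\mathbf D_{2,q}(b),\qquad b=b(\theta,\varepsilon).
\]
Here $\mathbf c_2^T\mathbf D_{1,q}(b)$ and $\mathbf c_4^T\mathbf D_{2,q}(b)$ are the quadrant-minus-half-cap integrals of $P_{2,q}g_1$ and $P_{4,q}g_1$ at threshold $b$. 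In your language, $F_{bb}b_0'^2$ cancels against the $-\tfrac{q\phi(q)}{2}b_0'^2$ part of $F_bb_0''$. The rest of $F_bb_0''$ is absorbed into the $\mathbf D$-vectors, so only the first-order motion of $b$ is needed.

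Under $g_1$, the coordinates $U_\theta,V_\theta$ are independent centered Gaussians of variance $\tfrac12$. Terms odd in $U$ drop out of cap integrals, and terms even in $U$ drop out of the half-cap defects. Everything then reduces to half-line moments in $U$ and the recursion
\[
I_k(b)=\frac{b^{k-1}e^{-b^2}}{2\sqrt\pi}+\frac{k-1}{2}I_{k-2}(b),\qquad I_k(b)=\pi^{-1/2}\int_b^\infty v^ke^{-v^2}\dd v.
\]
This recursion gives
\[
 \mathbf B_{1,q}(q)=\tfrac{qT_q}{4}e_1,\quad \mathbf D_{1,q}(q)=0,\quad \mathbf D_{1,q}'(q)=qD_q\,e_1,\quad \mathbf D_{2,q}(q)=-\tfrac{\kappa_q}{8}q^2D_q\,e_1 .
\]
Hence $b-q=\varepsilon\frac{q\kappa_q}{4}\cos2\theta+O(\varepsilon^2)$, and
\[
\Delta=K_q\varepsilon^2\bigl(\tfrac14\cos^22\theta-\tfrac18\cos4\theta\bigr)+O(\varepsilon^3)=\tfrac{K_q}{8}\varepsilon^2+O(\varepsilon^3).
\]

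In particular, the $\theta$-average of the $\varepsilon^2$ coefficient is $\mathbf D_{1,q}'(q)\cdot\mathbf B_{1,q}(q)/(2\phi(q))$. This is independent of $P_{4,q}$, whose contribution is a pure fourth harmonic. So the sign you need comes down to a single inner product, which your proposal never computes.
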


\begin{proof}
Central symmetry is immediate from the definitions, so every median line passes through the origin.  We compute the cap motion and the defect using the variation scheme of Section~\ref{sec:variation}.

Let
\[
 T_b=\frac{e^{-b^2}}{\sqrt\pi},\qquad
 D_b=\frac{e^{-b^2}}\pi,
 \qquad e_1=\begin{pmatrix}1\\0\end{pmatrix},
\]
and put $z=b^2-q^2$.  To package the angular dependence of the degree-two term, define vectors $\mathbf B_{1,q}(b)$ and $\mathbf D_{1,q}(b)$ by
\[
 \int_{V\ge b} P_{2,q}\!\left(\mathcal R_\theta(U,V)\right)g_1(U,V)\,\dd U\,\dd V
 =\mathbf c_2(\theta)^T\mathbf B_{1,q}(b)
\]
and
\[
\begin{aligned}
 &\int_{U\ge0,V\ge b}P_{2,q}\!\left(\mathcal R_\theta(U,V)\right)g_1(U,V)\,\dd U\,\dd V\\
 &\qquad-\frac12\int_{V\ge b}P_{2,q}\!\left(\mathcal R_\theta(U,V)\right)g_1(U,V)\,\dd U\,\dd V
 =\mathbf c_2(\theta)^T\mathbf D_{1,q}(b).
\end{aligned}
\]
Define $\mathbf B_{2,q}$ and $\mathbf D_{2,q}$ analogously for the degree-four term.  The first pair records the cap-mass contribution and the corresponding half-cap defect after the angular factor has been extracted. The second pair does the same for the fourth harmonic.

Under $g_1$, the coordinates $U$ and $V$ are independent centered Gaussians of variance $1/2$.  Since the cap $\{V\ge b\}$ contains the entire $U$-axis, terms odd in $U$ integrate to zero, and the remaining terms factor into ordinary even Gaussian moments in $U$ and one-sided truncated moments in $V$. We can then find closed formulas recursively by integration by parts. Equivalently, they may be organized in the Hermite basis using Rodrigues' formula and the basic Hermite recurrences \cite[\S1.1, equations (1.1.1)--(1.1.4)]{Thangavelu1993}.

For the degree-two perturbation one obtains
\begin{align}
 \mathbf B_{1,q}(b)
 &=-\frac{bT_b}{2}
 \begin{pmatrix}
  2z^2-2z-\frac12\\
  z(2z-3)
 \end{pmatrix},\label{eq:neg-B1}\\
 \mathbf D_{1,q}(b)
 &=\frac{D_b}{2}z
 \begin{pmatrix}
  1-2z\\2z
 \end{pmatrix}.\label{eq:neg-D1}
\end{align}
For the degree-four term one has
\begin{align*}
 \mathbf B_{2,q}(b)
 &=-\frac{\kappa_q}{16}bT_b\left(b^2-\frac32\right)
 \begin{pmatrix}0\\1\end{pmatrix},\\
 \mathbf D_{2,q}(b)
 &=-\frac{\kappa_q}{8}D_bb^2e_1.
\end{align*}

At the unperturbed threshold $b=q$, \eqref{eq:neg-B1}--\eqref{eq:neg-D1} give the three identities that drive the construction:
\begin{equation}\label{eq:local-general-q}
 \mathbf B_{1,q}(q)=\frac{qT_q}{4}e_1,
 \qquad
 \mathbf D_{1,q}(q)=0,
 \qquad
 \mathbf D_{1,q}'(q)=qD_qe_1.
\end{equation}
Here the prime denotes componentwise differentiation with respect to the scalar threshold $b$.

The exact cap threshold $b=b(\theta,\varepsilon)$ is determined by
\[
 G(b)+\varepsilon\mathbf c_2(\theta)^T\mathbf B_{1,q}(b)
 +\varepsilon^2\mathbf c_4(\theta)^T\mathbf B_{2,q}(b)=p.
\]
Differentiating this fixed-mass equation at $\varepsilon=0$, where $b=q$, yields
\[
 b'(0)
 =\frac{qT_q}{4\phi(q)}\cos2\theta
 =\frac{q\kappa_q}{4}\cos2\theta.
\]
Thus, uniformly in $\theta$,
\[
b(\theta,\varepsilon)
 =q+\varepsilon\frac{q\kappa_q}{4}\cos2\theta+O_q(\varepsilon^2).
\]

Because the median stays central, the exact defect is
\[
 \Delta_{\mu_{\varepsilon,q},p}(\theta)
 =\varepsilon\mathbf c_2(\theta)^T\mathbf D_{1,q}(b)
 +\varepsilon^2\mathbf c_4(\theta)^T\mathbf D_{2,q}(b).
\]
Since $b-q=O(\varepsilon)$, the first defect coefficient must be expanded through first order in $b-q$, while the second needs only its value at $q$.  Because $\mathbf D_{1,q}(q)=0$,
\[
 \mathbf D_{1,q}(b)
 =\mathbf D'_{1,q}(q)(b-q)+O\bigl((b-q)^2\bigr),
 \qquad
 \mathbf D_{2,q}(b)=\mathbf D_{2,q}(q)+O(b-q).
\]
Using
\[
 b-q=\varepsilon\frac{q\kappa_q}{4}\cos2\theta+O_q(\varepsilon^2)
\]
together with \eqref{eq:local-general-q} gives
\begin{align*}
 \Delta_{\mu_{\varepsilon,q},p}(\theta)
 &=K_q\varepsilon^2
 \left(\frac14\cos^2 2\theta-\frac18\cos4\theta\right)
 +O_q(\varepsilon^3)\\
 &=\frac{K_q}{8}\varepsilon^2+O_q(\varepsilon^3),
\end{align*}
where the second equality is $2\cos^2u-\cos2u=1$.  This proves the uniform asymptotic \eqref{eq:neg-asymptotic}.

It remains only to justify the analytic properties of the density.  Both perturbing polynomials have nonzero angular degree, so their integrals against the radial density $g_1$ vanish. Write
\[
 \rho_{\varepsilon,q}=g_0(1+\varepsilon h_{1,q}+\varepsilon^2h_{2,q}).
\]
For fixed $q$, each $h_{j,q}$ and its derivatives through order two is a polynomial times $e^{-r^2/2}$, hence bounded.  Therefore
\[
 \varepsilon h_{1,q}+\varepsilon^2h_{2,q}\longrightarrow0
 \quad\text{uniformly in }C^2(\R^2).
\]
For all sufficiently small $\varepsilon$ the density is strictly positive, and
\[
 \nabla^2\log\rho_{\varepsilon,q}\longrightarrow-I
 \quad\text{uniformly on }\R^2.
\]
Thus, for sufficiently small \(\varepsilon\), the Hessian of \(-\log \rho_{\varepsilon,q}\) remains uniformly positive definite, and hence \(\rho_{\varepsilon,q}\) is strongly log-concave.  Finally, since $K_q>0$ and the remainder in \eqref{eq:neg-asymptotic} is uniform in $\theta$, decreasing $\varepsilon_0(q)$ once more gives \eqref{eq:neg-gap}.  Notice that $K_q\to0$ as $q\downarrow0$ and as $q\to\infty$, corresponding respectively to $t\uparrow1/4$ and $t\downarrow0$; thus no uniform choice of $\varepsilon$ in the target is asserted or needed.  At the endpoints $t=0$ and $t=1/4$, this degeneration is consistent with the existence of perpendicular equipartitions.
\end{proof}

\section{Paired cuts and strict monotonicity}\label{sec:monotonicity}

We now study the full zero set of the imbalance \eqref{eq:master-imbalance}, rather than its restriction to perpendicular directions. For the differential arguments below, call a probability measure \(\mu\) \emph{regular} if
$$ \dd\mu(x)=\rho(x)\,\dd A(x), $$
where \(\rho\) is \(C^1\), strictly positive on \(\R^2\), and decays sufficiently rapidly that the line integrals and first line moments used below are finite. In particular, the median and \(p\)-cap thresholds of Section~\ref{sec:variation} are well defined and unique. A zero of \(\Phi_{\mu,p}\) will be called a \emph{paired cut}: equivalently, the median line bisects the corresponding \(p\)-cap.

Existence of some paired cut already follows from the planar ham-sandwich theorem. Indeed, fix a \(p\)-cap \(C\) and bisect simultaneously the measures \(\mu|_C\) and \(\mu|_{\R^2\setminus C}\). The resulting four regions have masses
$$ \frac p2,\quad \frac p2,\quad \frac{1-p}{2},\quad \frac{1-p}{2}, $$
so the bisecting line is also a median of \(\mu\). We need more: for each fixed median direction we will obtain a unique paired direction, depending continuously on it, together with control of how the imbalance changes as that direction rotates.

Fix \(u\in S^1\) and write
$$ v(\varphi)=\mathcal R_\varphi u,\qquad 0\le\varphi\le\pi, $$
with counterclockwise rotation positive, as before. At the endpoints,
$$ \Phi_{\mu,p}(u,u)=\frac p2,\qquad \Phi_{\mu,p}(u,-u)=-\frac p2. $$
The following lemma gives the required monotonicity.

\begin{lemma}\label{lem:monotonicity}
For fixed $u$, the function $\varphi\mapsto\Phi_{\mu,p}(u,\mathcal R_\varphi u)$ is strictly decreasing on $(0,\pi)$. More specifically, at an interior angle let the boundary of the $p$-cap meet the median at $P$, and parameterize the cap boundary by $x(s)=P+se$, choosing the unit tangent $e$ so that $s>0$ lies in $H_u^+$.  Put
\[
 w(s)=\rho(P+se),\quad
 W_\pm=\int_{\R_\pm}w(s)\dd s,\quad
 M_\pm=\int_{\R_\pm}s\,w(s)\dd s.
\]
Then
\[
\frac{\dd}{\dd\varphi}\Phi_{\mu,p}(u,\mathcal R_\varphi u)
 =-\frac{W_-M_+-W_+M_-}{W_++W_-}<0.
\]
\end{lemma}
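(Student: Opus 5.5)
The plan is to apply the variation principle \eqref{eq:variation-constraint}--\eqref{eq:variation-chain}, with $s=\varphi$ the rotation angle and $q=q_p(v(\varphi))$ the moving cap threshold. The median half-plane $H_u^+$ does not depend on $\varphi$, so only the cap moves. Write $\ell_\varphi$ for the cap boundary line. I will decompose the infinitesimal motion of $\ell_\varphi$ into two parts: a rotation about the point $P=\ell_\varphi\cap\partial H_u^+$, and a translation in the normal direction $v(\varphi)$. This point exists for interior angles because $v(\varphi)\neq\pm u$.

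\textbf{Step 1: regularity.} The cap mass $\mathcal A(\varphi,q)=\mu\{\langle x,v(\varphi)\rangle\ge q\}$ has $\mathcal A_q=-\int_{\ell}\rho=-(W_++W_-)<0$, since $\rho$ is strictly positive. Because $\rho$ is $C^1$ and decays fast enough, $\mathcal A$ is $C^1$. The implicit-function theorem then makes $q_p(v(\varphi))$ a $C^1$ function of $\varphi$, and $\Phi_{\mu,p}(u,v(\varphi))$ is differentiable.

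\textbf{Step 2: the two elementary variations.} I parameterize nearby lines by $(\varphi,h)$: the line obtained from $\ell_\varphi$ by rotating through the angle $\delta\varphi$ about $P$ and then translating by $h$ along $v$.
\begin{itemize}[leftmargin=2em]
\item \emph{Rotation.} A rotation by $\delta\varphi$ about $P$ sweeps a thin wedge. At signed arclength $s$ the wedge has width $s\,\delta\varphi$. Counterclockwise rotation moves points with $s>0$ (in the direction $e$) in one normal direction and points with $s<0$ in the other. So, to first order, the cap gains $\mp s\,w(s)\,\delta\varphi$ at each point. I will fix signs with the convention that $e$ points into $H_u^+$ and that $v(\varphi)=\mathcal R_\varphi u$. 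This should give a cap-mass derivative of $-(M_++M_-)$, and a derivative of $\mu(H_u^+\cap C)$ of $-M_+$, since only $s>0$ lies in $H_u^+$.
\item \emph{Translation.} Moving the line by $h$ along $v$ changes the cap mass at rate $-(W_++W_-)$ and the mass of $H_u^+\cap C$ at rate $-W_+$.
\end{itemize}

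\textbf{Step 3: combine via \eqref{eq:variation-chain}.} The fixed-mass constraint forces
\[
 h'=-\frac{M_++M_-}{W_++W_-}.
\]
Substituting this into the derivative of $\mu(H_u^+\cap C)$ gives
\[
 -M_++W_+\,\frac{M_++M_-}{W_++W_-}=-\frac{W_-M_+-W_+M_-}{W_++W_-}.
\]
This is the claimed formula.

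\textbf{Step 4: strict sign.} Since $w>0$, we have $W_\pm>0$, $M_+>0$ and $M_-<0$. Hence $W_-M_+-W_+M_->0$, so the derivative is strictly negative on $(0,\pi)$, and strict decrease follows. Together with the endpoint values $\pm p/2$, this will also yield a unique zero in the next step of the paper.

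\textbf{Main obstacle.} I expect the main difficulty to be getting the orientation signs in Step~2 right, that is, confirming that counterclockwise rotation of $v$ makes the cap lose mass on the $s>0$ half-line rather than gain it. I would check this directly in the model configuration $u=e_1$, $\varphi=\pi/2$, $v=(0,1)$, where $e=(1,0)$. A secondary difficulty is justifying the wedge computation rigorously, including differentiation under the integral, using the decay of $\rho$ and finiteness of the first line moments.
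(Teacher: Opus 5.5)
Your proposal is correct and is essentially the paper's proof. It uses the same implicit-variation scheme with the same translation derivatives $-(W_++W_-)$, $-W_+$ and rotation derivatives $-(M_++M_-)$, $-M_+$, and your orientation check is right ($v'=-e$). The only difference is that you pivot the rotation at $P$ instead of the origin, which removes from the start the term $c=\langle P,v'\rangle$ that the paper carries in $\mathcal A_\varphi$ and $\mathcal B_\varphi$ before it cancels.
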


\begin{proof}
For a real threshold $q$ define
\begin{align*}
 \mathcal A(\varphi,q)
 &=\mu\{x:\langle x,v(\varphi)\rangle\ge q\},\\
 \mathcal B(\varphi,q)
 &=\mu\bigl(H_u^+\cap\{x:\langle x,v(\varphi)\rangle\ge q\}\bigr).
\end{align*}
The actual cap threshold $q=q(\varphi)$ is determined by
\[
 \mathcal A(\varphi,q(\varphi))=p,
\]
and
\[
 \Phi_{\mu,p}(u,v(\varphi))
 =\mathcal B(\varphi,q(\varphi))-\frac p2.
\]
Thus \eqref{eq:variation-constraint}--\eqref{eq:variation-chain} give
\begin{equation}\label{eq:monotonicity-chain}
 q'=-\frac{\mathcal A_\varphi}{\mathcal A_q},
 \qquad
 \Phi'=\mathcal B_\varphi-\frac{\mathcal B_q\mathcal A_\varphi}{\mathcal A_q}.
\end{equation}

It remains to compute four partial derivatives.  Along the cap boundary write $x=P+se$ as in the statement.  Choose $e$ with the same orientation as above; then $v'(\varphi)=-e$.  Hence
\[
 \langle P+se,v'\rangle=c-s,
 \qquad c:=\langle P,v'\rangle.
\]
Differentiating a half-space integral with respect to its threshold and its normal gives
\begin{align*}
 \mathcal A_q&=-(W_++W_-),\\
 \mathcal A_\varphi&=c(W_++W_-)-(M_++M_-),\\
 \mathcal B_q&=-W_+,\\
 \mathcal B_\varphi&=cW_+-M_+.
\end{align*}

The first equation in \eqref{eq:monotonicity-chain} therefore reads
\[
 q'=c-\frac{M_++M_-}{W_++W_-}.
\]
Substitution into the second gives
\begin{align*}
 \Phi'
 &=cW_+-M_+-W_+
 \left(c-\frac{M_++M_-}{W_++W_-}\right)\\
 &=-\frac{W_-M_+-W_+M_-}{W_++W_-}.
\end{align*}
Since $W_\pm>0$, $M_+>0$, and $M_-<0$, this is strictly negative.
\end{proof}

The point of the calculation is the cancellation of the constant $c$: the fixed-mass equation determines the translation required to compensate for rotation, and that common translation disappears from the derivative of the imbalance.  This is the same calculus pattern used in the Gaussian construction, with the rotation angle $\varphi$ replacing the perturbation parameter $\varepsilon$.

\begin{theorem}\label{thm:pairedmap}
For every regular $\mu$, every $0<p\le1/2$, and every $u\in S^1$, there is a unique direction $F_p(u)$ in the positively oriented open semicircle from $u$ to $-u$ such that
\[
 \Phi_{\mu,p}(u,F_p(u))=0.
\]
The map $F_p:S^1\to S^1$ is continuous and has degree one.
\end{theorem}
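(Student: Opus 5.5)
Existence and uniqueness of $F_p(u)$ follow from Lemma~\ref{lem:monotonicity} by the intermediate value theorem. For fixed $u$, the function $f_u(\varphi)=\Phi_{\mu,p}(u,\mathcal R_\varphi u)$ is continuous on $[0,\pi]$. Its endpoint values are $f_u(0)=p/2>0$ and $f_u(\pi)=-p/2<0$, and it is strictly decreasing on $(0,\pi)$. Hence there is exactly one $\varphi(u)\in(0,\pi)$ with $f_u(\varphi(u))=0$, and we set $F_p(u)=\mathcal R_{\varphi(u)}u$.

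Continuity of $f_u$ (and joint continuity of $(u,\varphi)\mapsto f_u(\varphi)$) comes from two facts. First, for a regular $\mu$ the quantiles $m(u)$ and $q_p(v)$ depend continuously on the direction: the mass of a half-plane is continuous in both its normal and its threshold, and strictly monotone in the threshold because the density is positive. Second, $\mu$ is line-null, so the mass of a quadrant is continuous in its boundary data. At the endpoint $\varphi=0$ the cap $C_{u,p}$ lies inside $H_u^+$ (since $p\le 1/2$ forces $q_p(u)\ge m(u)$), which gives the value $p/2$. The endpoint $\varphi=\pi$ is the symmetric case and gives $-p/2$.

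For continuity of $u\mapsto\varphi(u)$, I would use the standard argument for unique zeros of a jointly continuous function. Take $u_n\to u$ and any subsequential limit $\varphi^*\in[0,\pi]$ of $\varphi(u_n)$. Joint continuity gives $f_u(\varphi^*)=0$. This rules out $\varphi^*\in\{0,\pi\}$, where $f_u=\pm p/2\ne0$. Uniqueness of the zero then forces $\varphi^*=\varphi(u)$. Alternatively, one can apply the implicit function theorem to $(\theta,\varphi)\mapsto f_{u_\theta}(\varphi)$: its $\varphi$-derivative is nonzero by Lemma~\ref{lem:monotonicity}, which also gives $C^1$ dependence. The plain compactness argument avoids needing differentiability in $u$, so I would use it.

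For the degree, write $u=u_\theta$. Then $F_p(u_\theta)=u_{\theta+\varphi(u_\theta)}$, with $\varphi$ a continuous function on the circle taking values in $(0,\pi)$. The lift $\theta\mapsto\theta+\varphi(u_\theta)$ increases by exactly $2\pi$ as $\theta$ goes from $0$ to $2\pi$, because $\varphi$ is single-valued and periodic. Hence $F_p$ has degree one; equivalently, $F_p$ is homotopic to the identity through $\mathcal R_{s\varphi(u)}u$, $s\in[0,1]$.

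The main obstacle I expect is the continuity step. It needs uniform control of the quantiles and of quadrant masses as both directions vary, and care at the endpoints of the semicircle, where the $p$-cap and the median half-plane become nested. Once continuity is in place, the degree computation is immediate.
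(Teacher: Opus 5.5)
Your proposal is correct and follows the paper's proof essentially step for step. You get the unique zero from the endpoint values $\pm p/2$ and Lemma~\ref{lem:monotonicity}, continuity from the subsequence-and-uniqueness argument (ruling out the endpoints because the function is nonzero there), and degree one from the lift $\theta\mapsto\theta+\gamma_p(\theta)$ with a continuous periodic displacement in $(0,\pi)$. Your added justification of the joint continuity of $\Phi_{\mu,p}$ and of the endpoint values, which the paper takes for granted, is accurate.
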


\begin{proof}
For fixed $u$, the endpoint values and Lemma~\ref{lem:monotonicity} give a unique zero by elementary one-variable calculus.

For continuity, let $u_n\to u$ and set $v_n=F_p(u_n)$.  Every subsequence has a further convergent subsequence $v_{n_k}\to v_*$.  Continuity of $\Phi_{\mu,p}$ gives $\Phi_{\mu,p}(u,v_*)=0$; the endpoint signs exclude $v_*=\pm u$, and uniqueness gives $v_*=F_p(u)$.  Hence the whole sequence converges.  Writing
\[
 F_p(u_\theta)=u_{\theta+\gamma_p(\theta)},\qquad 0<\gamma_p(\theta)<\pi,
\]
produces a continuous $2\pi$-periodic angular displacement.  The lift $\theta\mapsto\theta+\gamma_p(\theta)$ gains $2\pi$ in one turn, so $F_p$ has degree one.
\end{proof}

The relation with the orthogonality defect of Section~\ref{sec:variation} is now immediate.  For $u=u_\theta$, strict monotonicity says that
\[
 \varphi\longmapsto\Phi_{\mu,p}(u_\theta,\mathcal R_\varphi u_\theta)
\]
is strictly decreasing and vanishes precisely at $\varphi=\gamma_p(\theta)$.  Evaluating it at $\varphi=\pi/2$ gives
\begin{equation}\label{eq:defect-bridge}
 \sgn\Delta_{\mu,p}(u_\theta)
 =\sgn\!\left(\gamma_p(\theta)-\frac\pi2\right)
 =-\sgn\!\bigl(\cot\gamma_p(\theta)\bigr),
\end{equation}
whenever these quantities are nonzero.  Thus the scalar defect used in the counterexample and the angular displacement of the paired cycle record the same one-sided failure of orthogonality.  In particular, for every interior target, \eqref{eq:neg-gap} says that the paired cycle of the corresponding Gaussian counterexample lies entirely on the $\gamma_p>\pi/2$ side of the right-angle locus.

Projectivizing a direction identifies $u$ with $-u$, but the two coorientations of a $p$-cap are distinct choices when $p<1/2$.  In general $[F_p(u)]\ne[F_p(-u)]$, so there is no well-defined function $\RP^1\to\RP^1$.  Instead, as $u$ traverses the cooriented direction circle once positively, the pairs
\[
 ([u],[F_p(u)])
\]
trace a closed oriented cycle
\[
 C_p(\mu)\subset
 \mathcal C:=\RP^1\times\RP^1\setminus\Delta.
\]
The orientation, and any multiplicity at self-intersections, are those induced by this traversal.

\section{The partition action and its affine sign law}\label{sec:chirality}

Choose an affine coordinate $x$ on $\RP^1$.  On ordered pairs of distinct projective points consider the local one-form
\[
 \lambda_0=\frac{\dd x}{y-x}.
\]
The next identity is the key projective calculation.

\begin{lemma}\label{lem:cocycle}
For a projective transformation $g(x)=(ax+b)/(cx+d)$,
\[
(g\times g)^*\lambda_0=\lambda_0+\dd\log|cx+d|.
\]
\end{lemma}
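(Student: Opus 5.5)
The identity is a direct computation, so the plan is simply to pull back the form and simplify. I interpret $(g\times g)^*\lambda_0$ as the pullback under the map $(x,y)\mapsto(g(x),g(y))$ on ordered pairs of distinct points. In affine charts where $cx+d\neq0$ and $cy+d\neq0$ this means substituting $X=g(x)$, $Y=g(y)$ into $\dd X/(Y-X)$. The charts where $cx+d=0$ or $cy+d=0$ are handled afterwards by continuity or a change of chart, since both sides are local one-forms and the identity is between smooth objects on the overlap.

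First, I compute the derivative of $g$:
\[
 \dd X=g'(x)\,\dd x=\frac{ad-bc}{(cx+d)^2}\,\dd x .
\]
Second, I compute the difference of images by the standard cross-multiplication:
\[
 Y-X=\frac{(ay+b)(cx+d)-(ax+b)(cy+d)}{(cx+d)(cy+d)}=\frac{(ad-bc)(y-x)}{(cx+d)(cy+d)} .
\]
Dividing these, the determinant $ad-bc$ cancels, and so does one factor of $cx+d$. This leaves
\[
 (g\times g)^*\lambda_0=\frac{cy+d}{cx+d}\cdot\frac{\dd x}{y-x}.
\]

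Third, I split the prefactor as
\[
 \frac{cy+d}{cx+d}=1+\frac{c(y-x)}{cx+d}.
\]
The pullback then becomes
\[
 \frac{\dd x}{y-x}+\frac{c\,\dd x}{cx+d}=\lambda_0+\dd\log|cx+d|,
\]
because $\dd\log|cx+d|=c\,\dd x/(cx+d)$ wherever $cx+d\neq0$. The $y$-dependence of the extra term cancels exactly, which is what makes the correction exact, since it depends only on $x$.

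The only point needing care, and the one I expect to be the main obstacle, is bookkeeping rather than substance. One issue is the sign convention: the form is $\dd x/(y-x)$, with $\dd x$ and not $\dd y$, and the orientation of the pairs must be kept consistent. The other is confirming that the identity is meant locally, on the open set where $x$, $y$, $g(x)$ and $g(y)$ all lie in the affine chart, with the point at infinity treated by continuity. No earlier result of the paper is needed.
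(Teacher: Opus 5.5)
Your computation is correct and is the paper's own argument: compute $\dd g(x)$ and $g(y)-g(x)$, divide, and (as you spell out, where the paper says only ``division yields the formula'') split $(cy+d)/(cx+d)=1+c(y-x)/(cx+d)$ to identify the correction $c\,\dd x/(cx+d)=\dd\log|cx+d|$. Your remark that the identity is local, holding where $cx+d\neq0$ and $cy+d\neq0$, is a reasonable clarification that the paper leaves implicit.
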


\begin{proof}
Direct calculation gives
\[
 g(y)-g(x)=\frac{(ad-bc)(y-x)}{(cx+d)(cy+d)},\qquad
 \dd g(x)=\frac{ad-bc}{(cx+d)^2}\dd x,
\]
and division yields the formula.
\end{proof}

Thus the integral of this one-form over a closed cycle is projectively invariant, since the correction is exact and integrates to zero.

Now, in angular coordinates on $\RP^1=\R/\pi\mathbb Z$, choose the cooriented lift $0<\gamma=\varphi-\theta<\pi$.  A convenient global primitive on this cylinder is
\[
 \lambda=\cot(\varphi-\theta)\,\dd\theta,
 \qquad
 \dd\lambda=\frac{\dd\theta\wedge\dd\varphi}{\sin^2(\varphi-\theta)}.
\]
On slope charts, $\lambda$ and $\lambda_0$ differ by an exact form.

\begin{definition}\label{def:J}
For a regular measure define
\begin{equation}\label{eq:J}
 J_p(\mu):=\frac12\oint_{C_p(\mu)}\lambda
 =\frac12\int_0^{2\pi}\cot\gamma_p(\theta)\,\dd\theta.
\end{equation}
The factor \(1/2\) compensates for the fact that the first projection of the parametrized cycle is the standard two-fold covering \(S^1\to\RP^1\).
\end{definition}

Now, let $A(x)=Lx+b$ be an invertible affine map.  We write $A_*\mu$ for the pushforward measure, so $(A_*\mu)(E)=\mu(A^{-1}E)$.  A cooriented normal transforms by
\[
 a_L(u)=\frac{L^{-T}u}{\|L^{-T}u\|},\qquad L^{-T}=(L^{-1})^T.
\]

Under \(A\), median half-planes and \(p\)-caps are carried to median half-planes and \(p\)-caps for \(A_*\mu\), since pushforward preserves all their defining masses. Moreover, a paired intersection of mass \(p/2\) remains paired after applying \(A\). By uniqueness of the paired direction, the paired cycle of \(A_*\mu\) is therefore the diagonal projective image of \(C_p(\mu)\).

\begin{proposition}\label{prop:affineJ}
For every regular $\mu$ and every invertible affine map $A(x)=Lx+b$,
\[
J_p(A_*\mu)=\sgn(\det L)J_p(\mu).
\]
\end{proposition}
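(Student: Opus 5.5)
The plan is to reduce the statement to two facts: the projective invariance of $\oint\lambda$ under diagonal projective maps, and a bookkeeping of the orientation in which the cycle $C_p(A_*\mu)$ is traversed. Let $g$ be the projective transformation of $\RP^1$ induced by $L^{-T}$, so that $[a_L(u)]=g([u])$. The translation part $b$ does not affect normal directions, and the paragraph preceding the proposition already gives $F^{A_*\mu}_p(a_L(u))=a_L(F^\mu_p(u))$. Hence, as sets, $C_p(A_*\mu)=(g\times g)(C_p(\mu))$.

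\textbf{Step 1: invariance of the form.} I first show that $(g\times g)^*\lambda-\lambda$ is globally exact on the cylinder $\mathcal C$, and not merely exact in a chart.

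In the slope chart $x=\tan\theta$, $y=\tan\varphi$, the identity $\tan\varphi-\tan\theta=\sin(\varphi-\theta)/(\cos\theta\cos\varphi)$ together with $\cos\varphi=\cos\theta\cos(\varphi-\theta)-\sin\theta\sin(\varphi-\theta)$ gives
\[
\lambda_0=\lambda-\tan\theta\,\dd\theta=\lambda+\dd\log|\cos\theta| .
\]
Combining this with Lemma~\ref{lem:cocycle}, the difference $(g\times g)^*\lambda-\lambda$ equals, in the chart,
\[
\dd\bigl(\log|cx+d|+\log|\cos\theta|-\log|\cos g(\theta)|\bigr).
\]
Here $|cx+d|\,|\cos\theta|=|c\sin\theta+d\cos\theta|$. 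After dividing by $|\cos g(\theta)|$, the argument of the logarithm is a constant multiple of $\|Mu_\theta\|$ for the matrix $M$ representing $g$ (essentially $\|L^{-T}u_\theta\|$ or its reciprocal). This is a smooth, strictly positive function of $[u_\theta]\in\RP^1$ alone. So the difference is $\dd$ of a global function on $\mathcal C$, and its integral over any closed cycle vanishes:
\[
\oint_{(g\times g)_*C}\lambda=\oint_C\lambda .
\]
This holds whatever the sign of $\det L$, because Lemma~\ref{lem:cocycle} does not see orientation.

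I also check that $\lambda=\cot(\varphi-\theta)\,\dd\theta$ is well defined on projective pairs. Indeed, $\cot$ is $\pi$-periodic, so the cooriented lift $0<\gamma<\pi$ used in \eqref{eq:J} for $A_*\mu$ gives the same value as any other representative of the projective pair.

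\textbf{Step 2: orientation of the traversal.} This is where the sign arises, and I expect it to be the main point requiring care. By definition, $C_p(A_*\mu)$ is traversed as $u'$ runs once positively around $S^1$. Under the substitution $u'=a_L(u)$, the map $a_L\colon S^1\to S^1$ is a diffeomorphism of degree $\sgn(\det L^{-T})=\sgn(\det L)$. As $u$ runs positively, $u'$ therefore runs positively when $\det L>0$ and negatively when $\det L<0$. Consequently, as oriented cycles,
\[
C_p(A_*\mu)=\sgn(\det L)\,(g\times g)_*C_p(\mu).
\]
The factor $\tfrac12$ in Definition~\ref{def:J} is harmless, since the first projection is still the double cover in both cases.

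Combining Step 1 and Step 2 yields
\[
J_p(A_*\mu)=\tfrac12\sgn(\det L)\oint_{C_p(\mu)}\lambda=\sgn(\det L)\,J_p(\mu).
\]
I would also record that $A_*\mu$ is again regular, which ensures that $J_p(A_*\mu)$ is defined: its density is $\rho\circ A^{-1}/|\det L|$, which is $C^1$ and positive with the same decay.
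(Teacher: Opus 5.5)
The step that fails is your starting identity $F^{A_*\mu}_p(a_L(u))=a_L(F^{\mu}_p(u))$, which you attribute to the paragraph before the proposition. That paragraph only asserts that the paired cycle of $A_*\mu$ is the diagonal projective image of $C_p(\mu)$. The cooriented identity is false when $\det L<0$. Indeed, $\det(a_L(u),a_L(v))$ has the sign of $\det(L^{-T})\det(u,v)$. So an orientation-reversing $L$ carries $F^\mu_p(u)$, which lies in the positive semicircle from $u$ to $-u$, into the \emph{negative} semicircle from $a_L(u)$. Thus $a_L(F^\mu_p(u))$ is a zero of $\Phi_{A_*\mu,p}(a_L(u),\cdot)$, but not the one selected by the convention defining $F_p$.

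The failure is not merely one of coorientation. The point of $C_p(A_*\mu)$ at parameter $a_L(u)$ is in fact $(g[u],g[F^\mu_p(-u)])$, the image of the point of $C_p(\mu)$ at parameter $-u$. In general this differs from $(g[u],g[F^\mu_p(u)])$, because $[F_p(u)]\neq[F_p(-u)]$. So your substitution $u'=a_L(u)$ does not match parameters the way Step~2 assumes, and Step~2 is exactly where the sign is supposed to come from.

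The missing ingredient is the antisymmetry $\Phi_{\nu,p}(-u,v)=-\Phi_{\nu,p}(u,v)$ for regular $\nu$. It holds because the median half-planes with normals $u$ and $-u$ are complementary up to a null line. Combined with the semicircle convention, it gives the correct relation $F^{A_*\mu}_p(-a_L(u))=a_L(F^\mu_p(u))$ when $\det L<0$. This is precisely the paper's reparametrization $u\mapsto -a_L(u)$. With it, the set identity $C_p(A_*\mu)=(g\times g)(C_p(\mu))$ is justified. Since the antipodal map has degree $+1$ on $S^1$, the map $u\mapsto -a_L(u)$ still has degree $\sgn(\det L)$. So your oriented-cycle equation and the final formula do hold once this is inserted.

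Apart from this, your argument is the paper's, and your Step~1 genuinely improves on it. The cycle projects onto all of $\RP^1$, so it is never contained in a single affine chart, and the chart correction $\log|cx+d|$ is itself singular where $g(x)=\infty$. Your identification of $(g\times g)^*\lambda-\lambda$ with the global exact form $\dd\log\|Mu_\theta\|$ is what makes the paper's appeal to Lemma~\ref{lem:cocycle} rigorous.
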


\begin{proof}
If $\det L>0$, the induced map $a_L$ preserves the circular orientation and carries paired directions to paired directions.  The new paired cycle is the diagonal projective image of the old one with the same orientation; Lemma~\ref{lem:cocycle} preserves the closed-cycle integral.

If $\det L<0$, the induced map reverses circular orientation.  To retain the convention that $F_p(u)$ lies in the positive semicircle from $u$ to $-u$, one reparameterizes by $u\mapsto-a_L(u)$.  Projectively this is still the diagonal action of $A$, but the chosen orientation of the paired cycle is reversed.  Hence the action changes sign.
\end{proof}

The Euclidean metric enters at exactly one place.  A paired pair is perpendicular precisely when $\gamma_p=\pi/2$.  Equation~\eqref{eq:defect-bridge} shows that the integrand in \eqref{eq:J} has the opposite sign from the orthogonality defect $\Delta_{\mu,p}$.  Therefore, if no perpendicular paired cut exists, connectedness forces the paired cycle to remain entirely on one side of the right-angle locus, and $\cot\gamma_p$ has one strict sign. Hence not having a perpendicular paired cut implies that $J_p(\mu)\ne0$.

Now suppose that an affine reflection $R$ preserves $\mu$.  By invariance, $R_*\mu=\mu$: pushing the measure forward by $R$ does not change any of its masses.  On the other hand, the linear part of an affine reflection has negative determinant, so Proposition~\ref{prop:affineJ} says that the same pushforward reverses the sign of the partition action.  Therefore
\[
 J_p(\mu)=J_p(R_*\mu)=-J_p(\mu),
\]
and hence $J_p(\mu)=0$.  The preceding sign obstruction then forces the paired cycle to meet the right-angle locus, giving a perpendicular paired cut.

\begin{theorem}\label{thm:regular}
Let $\mu$ be regular and invariant under an affine reflection.  For every $0\leq t\le1/4$ there are two Euclidean-perpendicular lines whose cyclic region masses are
\[
 t,\quad t,\quad \frac{1}{2}-t,\quad\frac{1}{2}-t.
\]
\end{theorem}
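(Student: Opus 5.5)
The plan is to assemble the pieces already established in Sections~\ref{sec:monotonicity} and~\ref{sec:chirality}. We write $p=2t$. The endpoint cases are set aside as in Section~\ref{sec:variation}. For $t=0$ any halving line gives the degenerate partition. For $t=1/4$ we invoke Gr\"unbaum's perpendicular equipartition, or alternatively run the argument below with $p=1/2$, which Theorem~\ref{thm:pairedmap} allows. So we may assume $0<p<1/2$.

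First, since $\mu$ is regular, Theorem~\ref{thm:pairedmap} provides the continuous paired-direction map $F_p$ and hence the continuous angular displacement $\gamma_p(\theta)\in(0,\pi)$. Consequently $J_p(\mu)=\frac12\int_0^{2\pi}\cot\gamma_p(\theta)\dd\theta$ is a well-defined finite number. Second, let $R$ be the affine reflection preserving $\mu$, with linear part $L$. Its linear part is conjugate to $\mathrm{diag}(1,-1)$, so $\det L=-1$. Since $R_*\mu=\mu$ is again regular, Proposition~\ref{prop:affineJ} yields $J_p(\mu)=J_p(R_*\mu)=-J_p(\mu)$, and therefore $J_p(\mu)=0$.

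Third, we argue by contradiction. Suppose no perpendicular pair realizes the masses. Then $\Delta_{\mu,p}(u_\theta)\neq0$ for every $\theta$; this uses that every perpendicular partition is represented by one of the two coorientations of the $p$-cap, as noted in Section~\ref{sec:variation}. By the monotonicity of Lemma~\ref{lem:monotonicity}, $\gamma_p(\theta)\ne\pi/2$ for every $\theta$. Since $\gamma_p$ is continuous on the connected circle, it stays on one side of $\pi/2$, so $\cot\gamma_p$ is continuous with one strict sign. Its integral is then nonzero, contradicting $J_p(\mu)=0$. Hence some $\theta$ has $\gamma_p(\theta)=\pi/2$, which means $\Phi_{\mu,p}(u_\theta,u_\theta^\perp)=0$. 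The median line with normal $u_\theta$ and the $p$-cap boundary with normal $u_\theta^\perp$ are then perpendicular. The median splits the $p$-cap into halves of mass $p/2=t$ each and splits the complement into halves of mass $(1-p)/2=1/2-t$ each. The two $t$-regions both lie in the cap and are therefore adjacent, so the cyclic order is $t,t,1/2-t,1/2-t$.

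The main point needing care is the sign law for $\det L<0$ in Proposition~\ref{prop:affineJ}. Its justification is that the reparameterization $u\mapsto -a_L(u)$ reverses the traversal of the cycle while the integrand transforms only by an exact form, by Lemma~\ref{lem:cocycle}. We must also check that the global primitive $\lambda=\cot(\varphi-\theta)\dd\theta$ differs from the chart forms $\lambda_0$ by an exact form on the whole cylinder $\mathcal C$. If so, the invariance of closed-cycle integrals applies to $\lambda$ directly. Granting these results as stated earlier, the argument above is complete.
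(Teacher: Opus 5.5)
Your proposal is correct and is essentially the paper's own argument. The sign law of Proposition~\ref{prop:affineJ}, applied to the invariant reflection, gives $J_p(\mu)=0$, while the absence of a perpendicular paired cut would give the continuous function $\cot\gamma_p$ one strict sign on the circle and hence $J_p(\mu)\neq0$.

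The one check you flag should be phrased without $\lambda_0$. That form is singular where the slope chart fails, so it does not differ from $\lambda$ by an exact form on all of $\mathcal C$. Instead, let $M$ be the linear map by which $A$ acts on normals and $g$ the induced map of $\RP^1$. A direct computation gives $(g\times g)^*\lambda=\lambda+\tfrac12\,\dd\log|Mu_\theta|^2$. Since $\log|Mu_\theta|^2$ is smooth and $\pi$-periodic in $\theta$, the correction is globally exact on $\mathcal C$, which is exactly the invariance that Proposition~\ref{prop:affineJ} needs.
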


We have thus proved the theorem for regular measures. Theorem~\ref{thm:mainintro} follows from Theorem~\ref{thm:regular} by a symmetry-preserving approximation, carried out in Appendix~\ref{app:line-null}.

\section{Convex-body consequences}\label{sec:convex}

\begin{corollary}\label{cor:body}
Let $K\subset\R^2$ be a convex body possessing an orientation-reversing affine automorphism.  Then for every $0<t\le1/4$ there are two Euclidean-perpendicular lines cutting $K$ cyclically into areas
\[
 t,\quad t,\quad\frac12-t,\quad\frac12-t,
\]
after normalizing $\operatorname{area}(K)=1$.
\end{corollary}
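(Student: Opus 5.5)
We reduce Corollary~\ref{cor:body} to Theorem~\ref{thm:mainintro} applied to the normalized uniform measure on $K$. Set $\mu=\operatorname{area}(K)^{-1}\,\mathbf 1_K\,\dd A$. This is a Borel probability measure, and it assigns measure zero to every affine line because lines have zero Lebesgue measure. The areas of the four pieces of $K$ cut out by two perpendicular lines are exactly the $\mu$-masses of the corresponding open (or closed) sectors, after normalizing $\operatorname{area}(K)=1$. So it suffices to check that $\mu$ is invariant under an orientation-reversing affine involution.

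Let $A(x)=Lx+b$ be the given orientation-reversing affine automorphism of $K$, so $A(K)=K$ and $\det L<0$. Since $A$ preserves $K$, it preserves area restricted to $K$: $\operatorname{area}(A(K))=|\det L|\operatorname{area}(K)$ forces $|\det L|=1$. Hence $A_*\mu=\mu$. The only remaining issue, and the one step needing an argument, is that Theorem~\ref{thm:mainintro} asks for an \emph{involution}, while the hypothesis only gives an automorphism. I would handle this as follows.

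The affine automorphism group $G$ of a convex body is compact. It fixes the centroid $c$ of $K$, because affine maps send centroids to centroids. Conjugating by the translation $x\mapsto x-c$ and then by a linear map $T$ that averages an inner product over $G$ (the John ellipse of $K$ also works), we may take $G$ to be a closed subgroup of $O(2)$. A closed subgroup of $O(2)$ that contains an orientation-reversing element is either $O(2)$ itself or a dihedral group. In both cases every orientation-reversing element is a Euclidean reflection, hence an involution, so the conjugated $A$ satisfies $A^2=\mathrm{id}$. Conjugating back, $A$ itself is an affine involution of $K$ with $\det L<0$.

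Theorem~\ref{thm:mainintro} then yields, for each $0<t\le 1/4$, perpendicular lines whose cyclic $\mu$-masses are $t,t,\tfrac12-t,\tfrac12-t$, which are the required areas. Note that Theorem~\ref{thm:mainintro} involves Euclidean perpendicularity for the original measure. We therefore apply it to $\mu$ directly and use the normalization only to prove that $A$ is an involution; we do not move the measure by $T$.
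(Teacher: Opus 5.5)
Your proposal is correct and follows essentially the paper's argument: conjugate the automorphism into $O(2)$ via an invariant ellipse, observe that orientation-reversing orthogonal maps of the plane are reflections and hence involutions, and apply Theorem~\ref{thm:mainintro} to the uniform measure on $K$ itself. The paper uses the unique John ellipse and its center, whereas you use an averaged inner product about the centroid; these are equivalent devices. Your detour through the classification of closed subgroups of $O(2)$ is unnecessary, since every orientation-reversing element of $O(2)$ is already a reflection.
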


\begin{proof}
Let $E$ be the John ellipsoid of $K$, which is unique \cite[Theorem~10.12.2, p.~588]{Schneider2014}.  Every affine automorphism of $K$ therefore preserves $E$ and fixes its center.  After translating this center to the origin and conjugating by a linear map carrying $E$ to the unit disk, the automorphism becomes orthogonal.  In the plane every orientation-reversing orthogonal transformation is a reflection, hence has order two.  Thus the original automorphism is an affine reflection, and Theorem~\ref{thm:mainintro} applies to uniform area on $K$.
\end{proof}

\begin{corollary}\label{cor:triangle}
Every triangle satisfies Gr\"unbaum's uneven orthogonal partition conjecture for every target.
\end{corollary}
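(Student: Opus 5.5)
The plan is to reduce Corollary~\ref{cor:triangle} to Corollary~\ref{cor:body}. To do that, we exhibit an orientation-reversing affine automorphism of an arbitrary triangle $T$ with vertices $a,b,c$.

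\emph{Step 1: construct the automorphism.} An affine map of the plane is determined by the images of three affinely independent points. Let $A$ be the unique affine map with
\[
 A(a)=b,\qquad A(b)=a,\qquad A(c)=c.
\]
It permutes the vertices, so it maps $T=\operatorname{conv}\{a,b,c\}$ onto itself. Moreover $A^2$ fixes all three vertices, hence $A^2=\mathrm{id}$, and $A$ is an involution.

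\emph{Step 2: orientation.} In the barycentric frame based at $c$, the linear part $L$ of $A$ sends $b-c\mapsto a-c$ and $a-c\mapsto b-c$. In the basis $(a-c,b-c)$ its matrix is the swap matrix $\begin{pmatrix}0&1\\1&0\end{pmatrix}$, which has determinant $-1$. So $A$ is an orientation-reversing affine involution. Concretely, it is the affine reflection fixing the median from $c$ pointwise, and its direction of reflection is parallel to $ab$.

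\emph{Step 3: apply the earlier results.} Normalize the area of $T$ to one. Corollary~\ref{cor:body} then gives, for every $0<t\le 1/4$, two Euclidean-perpendicular lines cutting $T$ cyclically into areas $t,t,1/2-t,1/2-t$. Alternatively, one can invoke Theorem~\ref{thm:mainintro} directly with the uniform measure on $T$: it is line-null and $A$-invariant, because $A$ preserves $T$ and scales area by $|\det L|=1$. The endpoint $t=0$ is the degenerate convention of Section~\ref{sec:variation}: a single halving line, which exists by continuity. So every target $0\le t\le 1/4$ is covered.

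There is no real obstacle here. The only points that need care are checking that the vertex swap has negative determinant and confirming that $A$ really preserves area, so that the uniform measure is invariant. Both checks are immediate from the explicit matrix.
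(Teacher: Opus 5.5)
Your proposal is correct and follows the paper's proof: interchanging two vertices while fixing the third gives an orientation-reversing affine automorphism of the triangle, and Corollary~\ref{cor:body} then applies. You also check directly that this map is an involution, so you may invoke Theorem~\ref{thm:mainintro} without the John-ellipsoid step of Corollary~\ref{cor:body}; this shortcut is harmless and slightly more self-contained.
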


\begin{proof}
Every triangle has an orientation-reversing affine automorphism: interchange two vertices and fix the third.  Corollary~\ref{cor:body} applies.
\end{proof}

\begin{corollary}\label{cor:trapezoid}
Every convex trapezoid satisfies the Gr\"unbaum uneven orthogonal partition conjecture for every target.
\end{corollary}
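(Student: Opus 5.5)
The plan is to reduce the trapezoid case to Corollary~\ref{cor:body}: it suffices to exhibit, for every convex trapezoid $K$, an orientation-reversing affine automorphism of $K$. Corollary~\ref{cor:body} then yields the partition for every $0<t\le 1/4$. The endpoint $t=0$ is the degenerate halving-line case fixed in Section~\ref{sec:variation}, so every target is covered.

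Write the trapezoid as the convex hull of two parallel segments. After a rotation and translation (an isometry, harmless since we only need some affine automorphism), let the parallel sides lie on the lines $y=0$ and $y=1$, with bottom side $[a,b]\times\{0\}$ and top side $[c,d]\times\{1\}$. The key step is to find an affine map $A$ that preserves each of the two horizontal lines and reverses the left-right order on each. Such an $A$ swaps the endpoints of the bottom side and swaps the endpoints of the top side, so it maps the vertex set onto itself and hence $A(K)=K$.

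I would look for $A$ of the form
\[
A(x,y)=(-x+\alpha y+\beta,\;y).
\]
On $y=0$ this is $x\mapsto -x+\beta$, so taking $\beta=a+b$ swaps $a$ and $b$. On $y=1$ it is $x\mapsto -x+\alpha+\beta$, so taking $\alpha=c+d-a-b$ swaps $c$ and $d$. The linear part is $\begin{pmatrix}-1&\alpha\\0&1\end{pmatrix}$, which has determinant $-1$, so $A$ is orientation-reversing. It is also an involution, since the linear part squares to the identity and $A$ fixes the midpoint of the bottom side. This verification is the only real content.

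There is no genuine obstacle; the one point to check is the convention that parallelograms, and possibly degenerate trapezoids, are included. The construction above covers any quadrilateral with at least one pair of parallel sides. If "trapezoid" is read to include triangles as limits, Corollary~\ref{cor:triangle} covers them anyway.
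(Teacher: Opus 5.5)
Your argument is correct and essentially the paper's proof: the paper normalizes the vertices to $(0,0),(1,0),(u,1),(v,1)$ and uses the same reflection $R(x,y)=\bigl(1+(u+v-1)y-x,\,y\bigr)$, which is your map with $\beta=1$ and $\alpha=u+v-1$, before invoking Corollary~\ref{cor:body}. One small slip is that putting the parallel sides on $y=0$ and $y=1$ needs a scaling as well as an isometry, but this is harmless because having an orientation-reversing affine automorphism is preserved under affine conjugation.
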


\begin{proof}
After an affine normalization, write its vertices as
\[
 A=(0,0),\quad B=(1,0),\quad D=(u,1),\quad C=(v,1).
\]
Then
\[
 R(x,y)=\bigl(1+(u+v-1)y-x,\,y\bigr)
\]
interchanges $A$ with $B$ and $D$ with $C$, has determinant $-1$, and satisfies $R^2=\mathrm{id}$.  Corollary~\ref{cor:body} applies.
\end{proof}

\section{Discussion and further directions}\label{sec:discussion}

\subsection*{Gaussian perturbations and negative results}

Many counterexamples in mass-partition problems are built from measures concentrated near configurations on which the cutting objects have little freedom.  Avis's moment-curve construction is a classical example \cite{Avis1984}.  B\'ar\'any and Matou\v{s}ek use related elementary constructions for partitions by fans: atomic or segment-supported measures force rays to satisfy rigid incidence conditions, and in one example a small perturbation moves a mass-determined ray and destroys the required concurrency \cite{BaranyMatousek2001}.  Such constructions expose the geometric obstruction very directly.

Sober\'on's recent four-dimensional counterexample introduced a different and particularly flexible source of examples: small perturbations of a Gaussian \cite{Soberon2026}.  Our construction follows this viewpoint in the plane.  Gaussian perturbations have the advantage that both the underlying measure and its variations remain explicit: polynomial perturbations can be organized by degree and by the order at which they affect the defect, while the required calculations reduce largely to truncated Gaussian moments.  This makes them useful not only for constructing smooth counterexamples, but also for understanding systematically how mass-defined boundaries move under perturbation.

\subsection*{Metric invariants and topological methods}

Topological methods have been extraordinarily successful in mass-partition problems, particularly when symmetries of the configuration space can be encoded in an equivariant test map; see the survey of Roldán-Pensado and Soberón \cite{RoldanSoberon2022} and the work of Blagojević, Frick, Haase and Ziegler \cite{BlagojevicFrickHaaseZiegler2016,BlagojevicFrickHaaseZiegler2018}. Particularly close to the present problem, Blagojević and Dimitrijević Blagojević obtained a related spherical \(4\)-fan theorem by equivariant methods \cite{BlagojevicDimitrijevic2013}, while McGinnis and Zerbib recently highlighted Grünbaum's conjecture as a natural open problem for KKM-type methods \cite{McGinnisZerbib2024}.

This suggests that in constrained partition problems it may be useful to look, alongside topological obstructions, for metric or projective invariants whose sign detects the desired geometric condition.  Symmetries that reverse such a sign can then force the invariant to vanish.  The convex-geometric argument of Arocha, Jer\'onimo-Castro, Montejano and Rold\'an-Pensado \cite{ArochaEtAl2010}, which obtains opposite defect signs from geometric information about the body, fits broadly into the same philosophy.

\subsection*{Liouville geometry of the paired cycle}

The integral defining $J_p$ is closely related to a standard object in hyperbolic geometry.  The space $\mathcal C=\RP^1\times\RP^1\setminus\Delta$ 
can be identified with the space of oriented geodesics of the hyperbolic plane, by recording their ordered endpoints at infinity (see e.g. Bracho \cite{Bracho2009} for an elementary introduction). In these coordinates,
$$ \omega=\frac{\dd x\wedge\dd y}{(y-x)^2} $$
is the classical Liouville measure on the space of hyperbolic geodesics; in angular endpoint coordinates it is \( \dd\alpha\,\dd\beta/[4\sin^2((\alpha-\beta)/2)]\), as in Bonahon \cite[\S2 and Appendix~A2]{Bonahon1988}. The local one-form used above is a primitive of this form. It would be interesting to understand whether curves arising from paired cuts have further intrinsic properties from this hyperbolic viewpoint.

\subsection*{Remaining questions}

Theorem~\ref{thm:counterintro} settles B\'ar\'any's general-measure question, but Gr\"unbaum's convex-body problem remains open. The corollaries in Section \ref{sec:convex} deal with triangles and trapezoids. The next natural test case is that of general convex quadrilaterals, which remains open. We believe this family could yield new insights on the problem.

\begin{problem}\label{prob:quadri}
Does every convex quadrilateral satisfy Gr\"unbaum's conjecture for every target?
\end{problem}

The positive argument naturally raises the question of which paired cycles can occur. In view of the hyperbolic interpretation above, this may also be regarded as a realization problem for curves in the space of oriented hyperbolic geodesics.

\begin{problem}\label{q:realization}
Which closed oriented curves in
\[
 \RP^1\times\RP^1\setminus\Delta
\]
arise as paired cycles $C_p(\mu)$ of regular planar measures?  What additional restrictions arise for uniform measures on convex bodies?
\end{problem}

There is also a basic question about the set of admissible targets for a fixed measure \(\mu\). Our counterexamples are constructed separately for each \(t\in(0,1/4)\), so the possible behavior of the target set for a single measure remains open.

\begin{problem}\label{q:targets}
For a fixed line-null probability measure $\mu$, let
\[
 T(\mu)=
 \left\{
 t\in[0,1/4]:
 \text{$\mu$ admits the prescribed orthogonal partition at target $t$}
 \right\}.
\]
What sets can occur as $T(\mu)$?  What additional structure does $T(\mu)$ have for regular measures, or for uniform measures on convex bodies?
\end{problem}

The finite counterexample also leaves a basic quantitative problem open.  Maldonado--Rold\'an-Pensado prove that $k=1$ is always solvable, while Theorem~\ref{thm:discrete-counterexample} gives a failure at $k=8$.

\begin{problem}\label{q:smaller-discrete}
What is the smallest integer $k$ for which the discrete orthogonal partition problem admits a counterexample?  In particular, does a counterexample exist for some $2\le k\le7$?
\end{problem}

\appendix

\section{From regular to line-null and discrete measures}\label{app:line-null}

For completeness, we record the approximation arguments used to pass from the regular theorem to line-null measures and to the discrete corollary.  The construction is standard; the only point requiring care here is that the affine reflection symmetry and the moving partition boundaries must survive the limit.

\subsection*{Symmetry-preserving regularization}
Let $R$ be the orientation-reversing affine involution preserving $\mu$.  Translate a fixed point of $R$ to the origin and write $R(x)=Lx$, where $L^2=I$ and $\det L=-1$.  Choose an $L$-invariant positive-definite quadratic form, for instance
\[
 Q(z)=|z|^2+|Lz|^2.
\]
Let $B_Q(r)=\{Q(z)\le r^2\}$.  These balls are $R$-invariant.  Restrict $\mu$ to $B_Q(r_n)$, renormalize, and convolve with a positive $Q$-radial Gaussian kernel of scale $\varepsilon_n$, where $r_n\to\infty$ and $\varepsilon_n\downarrow0$.  The resulting probabilities $\mu_n$ are $R$-invariant, have smooth strictly positive densities with rapid decay, and satisfy
\[
 \mu_n\Rightarrow\mu.
\]
Thus each $\mu_n$ is regular in the sense of Section~\ref{sec:monotonicity}.  This is a standard truncation-and-mollification argument; for the convolution and approximate-identity facts see Folland \cite[Sections~8.2--8.3]{Folland1999}.  The $L$-invariant metric is used only to preserve the affine reflection exactly during the regularization.

\subsection*{Compactness of the cutting lines}
Apply Theorem~\ref{thm:regular} to $\mu_n$.  Label the two perpendicular lines so that the first is a median line and the second bounds the side of mass $p$.  Write them as
\[
 \langle x,u_n\rangle=a_n,
 \qquad
 \langle x,v_n\rangle=b_n,
\]
with $u_n,v_n\in S^1$.  Passing to a subsequence, the directions converge.  The offsets are bounded as well.  Indeed, weak convergence implies uniform tightness of $(\mu_n)$: for every small $\varepsilon>0$ there is a Euclidean ball $B$ with $\mu_n(B)>1-\varepsilon$ for all sufficiently large $n$.  If $\varepsilon<\min\{p,1/2\}$, a median line cannot escape beyond $B$, because then one of its half-planes would have mass less than $\varepsilon$; similarly, the boundary of a side of mass $p$ cannot escape in either direction, since that side would then have mass either less than $\varepsilon$ or greater than $1-\varepsilon$.  Hence, after another subsequence,
\[
 (u_n,a_n,v_n,b_n)\longrightarrow(u,a,v,b).
\]
The limiting lines remain perpendicular because $u_n\cdot v_n=0$ for every $n$.

For a fixed sector, convergence of its mass is the continuity-set clause of the Portmanteau theorem; see Billingsley \cite[Theorem~2.1]{Billingsley1999}.  The following elementary statement is the varying-boundary version needed here.

\begin{lemma}[Moving sectors]\label{lem:movingsectors}
Suppose $\nu_n\Rightarrow\nu$.  Let $S_n$ be an open sector bounded by two lines whose defining normals and offsets converge to those of an open sector $S$.  If $\nu$ gives zero mass to the two limiting boundary lines, then
\[
 \nu_n(S_n)\longrightarrow\nu(S).
\]
\end{lemma}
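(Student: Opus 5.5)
I would prove Lemma~\ref{lem:movingsectors} by squeezing the moving sectors between fixed sectors that are continuity sets of $\nu$, and then invoking the Portmanteau theorem \cite[Theorem~2.1]{Billingsley1999} for those fixed sets. Write $S=\{x:\langle x,u\rangle>a,\ \langle x,v\rangle>b\}$ and $S_n=\{x:\langle x,u_n\rangle>a_n,\ \langle x,v_n\rangle>b_n\}$, with $(u_n,a_n,v_n,b_n)\to(u,a,v,b)$. If the sector is instead described as an intersection of open half-planes with reversed signs, the argument is identical.

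\textbf{Step 1 (tightness reduction).} Fix $\eta>0$. Weak convergence gives uniform tightness, so I choose a closed Euclidean ball $B$ of radius $R$ with $\nu_n(\R^2\setminus B)<\eta$ for all large $n$ and $\nu(\R^2\setminus B)<\eta$. Enlarging $R$ slightly, I may also assume $\nu(\partial B)=0$, since only countably many radii carry positive boundary mass.

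\textbf{Step 2 (squeezing on $B$).} For $\delta>0$ define the inner and outer sectors
\[
 S^{-}_\delta=\{\langle x,u\rangle>a+\delta,\ \langle x,v\rangle>b+\delta\},\qquad
 S^{+}_\delta=\{\langle x,u\rangle>a-\delta,\ \langle x,v\rangle>b-\delta\}.
\]
On $B$ we have $|\langle x,u_n\rangle-\langle x,u\rangle|\le R|u_n-u|$, and the same holds for $v_n$. Hence for all $n$ large (depending on $\delta$),
\[
 S^-_\delta\cap B\subseteq S_n\cap B\subseteq S^+_\delta\cap B .
\]
Only countably many $\delta$ give $\nu$ positive mass on the lines $\langle x,u\rangle=a\pm\delta$ or $\langle x,v\rangle=b\pm\delta$. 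I therefore choose $\delta$ outside this countable set, so that $S^\pm_\delta\cap B$ are $\nu$-continuity sets; their boundaries lie in the union of these lines and $\partial B$. Portmanteau then gives $\nu_n(S^\pm_\delta\cap B)\to\nu(S^\pm_\delta\cap B)$.

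\textbf{Step 3 (combine).} Steps 1 and 2 yield
\[
 \nu(S^-_\delta)-2\eta\le\liminf_n \nu_n(S_n)\le\limsup_n\nu_n(S_n)\le \nu(S^+_\delta)+2\eta .
\]
As $\delta\downarrow0$ along admissible values, $S^-_\delta\uparrow S$, so $\nu(S^-_\delta)\to\nu(S)$. Also $S^+_\delta\downarrow\{\langle x,u\rangle\ge a,\ \langle x,v\rangle\ge b\}$, which is contained in $S$ together with the two limiting boundary lines. That union has $\nu$-mass $\nu(S)$ by hypothesis, so $\nu(S^+_\delta)\to\nu(S)$. Letting $\eta\to0$ finishes the proof.

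\textbf{Main obstacle.} The only delicate point is Step 2. The convergence of the directions is not uniform on all of $\R^2$, because lines pivot far from the origin; this is exactly why the tightness cutoff to $B$ is needed. The auxiliary $\delta$ and $R$ must also be chosen off a countable set so that Portmanteau applies to the fixed comparison sets. No special care is needed for degenerate limiting sectors (for instance $u=-v$), because the squeeze argument never uses the angle between the two lines.
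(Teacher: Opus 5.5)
Your proof is correct and follows essentially the same route as the paper's: use tightness to restrict to a large ball, trap $S_n\cap B$ between fixed inner and outer sectors that differ from $S$ only near the two $\nu$-null limiting lines, and pass to the limit using weak convergence. Your version makes explicit what the paper's phrase ``inner and outer approximations'' leaves implicit, namely choosing $\delta$ and $R$ off countable exceptional sets so that the Portmanteau theorem applies to the comparison sets.
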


\begin{proof}
Fix $\eta>0$.  By tightness, choose a large ball $B$ carrying all but $\eta$ of the mass of $\nu$ and, for all sufficiently large $n$, all but $2\eta$ of the mass of $\nu_n$.  Since the two limiting lines have $\nu$-mass zero, the $\nu$-mass of a $\delta$-neighborhood of their union tends to zero with $\delta$.  On the fixed ball $B$, convergence of normals and offsets implies that for large $n$ the symmetric difference $S_n\triangle S$ is contained in such a $\delta$-neighborhood.  Inner and outer approximations of $S\cap B$ by closed and open sets, together with weak convergence, then give
\[
 \limsup_n|\nu_n(S_n)-\nu(S)|\le C\eta
\]
for an absolute constant $C$.  Letting $\eta\downarrow0$ proves the claim.
\end{proof}

Apply Lemma~\ref{lem:movingsectors} to each of the four sectors of the convergent perpendicular pair.  Their limiting boundaries lie in the two limiting lines, both of $\mu$-mass zero.  Hence the four masses pass to the limit and equal
\[
 \frac p2,\quad\frac p2,\quad\frac{1-p}{2},\quad\frac{1-p}{2}.
\]
Taking $p=2t$ proves Theorem~\ref{thm:mainintro}.

\subsection*{The discrete corollary}
We now prove Corollary~\ref{cor:discrete}.

\begin{proof}
Give each point mass $1/n$ and replace every atom by a small positive Gaussian bump with respect to an inner product invariant under the linear part of the affine involution.  Using the same bump on every orbit gives a regular invariant probability measure $\mu_\varepsilon$.  Apply Theorem~\ref{thm:regular} with $p=2k/n$ and let $\varepsilon\downarrow0$.  After taking a convergent subsequence of the perpendicular pairs, every point lying strictly inside a limiting open sector eventually contributes asymptotically its full mass $1/n$ to the corresponding smooth sector.  Since the smooth sector masses are exactly $k/n,k/n,(n-2k)/(2n),(n-2k)/(2n)$, the stated upper bounds follow.
\end{proof}

When no point lies on either cutting line and the four target numbers are integers, the upper bounds sum to $n$ and therefore become equalities.  In general boundary points are unavoidable, which is why the open-sector formulation is the unconditional discrete statement.

\section{An explicit finite counterexample}\label{app:discrete-counterexample}
We finish with the computer-assisted finite result stated in Theorem~\ref{thm:discrete-counterexample}.  A \emph{strict} partition uses two perpendicular lines avoiding the points.  A \emph{weak} partition may place points on either line and assign each such point to any of its incident sectors.

Let $|P|=2m$ and seek counts $(k,k,m-k,m-k)$.  For a nonzero cap normal $n$, put $u=Jn=(-n_y,n_x)$.  Let $A=\Top_m(u)$ and $B=\Top_{2k}(n)$ be the index sets of the largest indicated projections, allowing every admissible choice at a tied cutoff.  Write $q(n;A,B)=|A\cap B|$. The four cyclic counts determined by these two threshold half-planes are
\[
 q,\quad m-q,\quad m-2k+q,\quad 2k-q.
\]
Consequently a weak target partition exists exactly when there is a choice such that 
\begin{equation}\label{eq:discrete-rank}
 q(n;A,B)=k.
\end{equation}
This reduces the search over pairs of perpendicular lines to a finite sweep over the direction \(n\).

In our counterexample, $m=48$ and $k=8$. Using the points $p_i$ from Table~\ref{tab:discrete-points}, the point set is $$P=\{p_1,\ldots,p_{48},-p_1,\ldots,-p_{48}\}.$$

\begin{table}[ht]
\centering\small
\renewcommand{\arraystretch}{1.08}
\begin{tabular}{r rr@{\qquad}r rr}
\toprule
$i$ & $x_i$ & $y_i$ & $i$ & $x_i$ & $y_i$\\
\midrule
1 & 6501822 & -2132732 & 25 & 802940 & 10847176 \\
2 & 15605488 & 5179146 & 26 & 3156889 & -2804411 \\
3 & 12164442 & 4079437 & 27 & -1513310 & 8052343 \\
4 & 11196842 & 3257208 & 28 & -1270507 & 8035601 \\
5 & 2720115 & -1016910 & 29 & -15239401 & 22823153 \\
6 & 14742722 & 5251779 & 30 & 3404526 & 3376251 \\
7 & 12027311 & 4754946 & 31 & -8703705 & 3450614 \\
8 & 5567252 & 6636475 & 32 & -3796526 & 7686574 \\
9 & 13019739 & 6905511 & 33 & -14003196 & 22067042 \\
10 & 16430000 & 16782244 & 34 & -11395404 & 12761290 \\
11 & 6290597 & 5509041 & 35 & -5828330 & 7246273 \\
12 & 5219998 & 6968694 & 36 & -14174109 & 18264971 \\
13 & 2016438 & 2205060 & 37 & -16952370 & 23820851 \\
14 & 5219033 & 6950426 & 38 & -9691753 & 6788722 \\
15 & 13124961 & 20240705 & 39 & -6093427 & 5843902 \\
16 & 5798488 & 6042258 & 40 & -8209011 & 4191810 \\
17 & 5263233 & 6936155 & 41 & -9871547 & 6272237 \\
18 & 1756625 & 10727867 & 42 & -11267533 & -795180 \\
19 & 8477144 & 15858722 & 43 & -8805970 & 3228616 \\
20 & 1728935 & 10746669 & 44 & -9671078 & 1515998 \\
21 & 2851789 & 9062043 & 45 & -8816653 & 3213163 \\
22 & -2853968 & 12408044 & 46 & -8753576 & 3324682 \\
23 & -1439883 & 8032057 & 47 & -8450033 & 3913033 \\
24 & 1836987 & 1695724 & 48 & -6832402 & 5424922 \\
\bottomrule
\end{tabular}
\caption{$48$ points from the $96$-point counterexample; the other points are their negatives.}
\label{tab:discrete-points}
\end{table}

The configuration was found by a floating-point simulated-annealing search tailored to centrally symmetric sets. The search evaluates the energy
$$E(P)=\sum_{\text{chambers}} \max\{0,9-q(n;A,B)\}^2.$$
Random local polar perturbations of single representatives are accepted or rejected according to an annealing schedule. When the energy reached \(0\), the floating-point coordinates were rounded to integers and then certified exactly as follows.

For every difference $d=p_i-p_j\ne0$, changes in either relevant projection order can occur only when $n$ is parallel or perpendicular to $d$.  These finitely many critical rays cut the direction circle into $9{,}216$ open chambers.  Inside each chamber both projection orders are constant, so a single exact integer-arithmetic sample determines $q(n;A,B)$ throughout the chamber.  At a critical direction, all admissible assignments at the two tied cutoffs are checked exactly.

Two separately implemented verifiers show that $q(n;A,B)-8\in\{1,2\}$ 
for every open chamber and every admissible critical-direction assignment. Thus \eqref{eq:discrete-rank} never occurs.  Exact determinants also verify that the $96$ points are distinct and that no three are collinear.  This proves Theorem~\ref{thm:discrete-counterexample}.

\section*{Acknowledgements}
This work was supported by UNAM-PAPIIT project IN119026. The author also acknowledges support from the UNAM DGAPA-PASPA program for a sabbatical stay at the Instituto de Matem\'aticas, Unidad Juriquilla, UNAM.

The research leading to this work involved an extended interaction with ChatGPT (OpenAI), using GPT-5.6 Sol. The model was used throughout the project for mathematical exploration, development and refinement of proof strategies, preliminary drafting, and computational work. The arguments, formulations, and computations in the paper emerged through repeated collaborative interaction, revision, and correction during this process. The final proofs, computations, and references have been checked by the author, who assumes full responsibility for the mathematical content of the paper, including any errors or bibliographic omissions.

\end{document}